\documentclass[11pt]{article}
\usepackage{graphicx,mathrsfs,amssymb}
\usepackage{amsmath,latexsym, color, amsbsy,amssymb,cancel}
\usepackage{graphicx}

\makeatletter
\newcommand{\superimpose}[2]{%
	{\ooalign{$#1\@firstoftwo#2$\cr\hfil$#1\@secondoftwo#2$\hfil\cr}}}
\makeatother

\def\ds{\displaystyle}
\def\forall{\hbox{for all}~}

\def\ve{\varepsilon}
\def\n{\noindent}

\def\R{\mathbb{R}}

\def\vs{\vskip 2em}

\def\v{\vskip 1em}

\def\bega{\begin{array}}
	\def\enda{\end{array}}
\def\begi{\begin{itemize}}
	\def\endi{\end{itemize}}

\def\bel{\begin{equation}\label}
	\def\eeq{\end{equation}}
\def\sqr#1#2{\vbox{\hrule height .#2pt
		\hbox{\vrule width .#2pt height #1pt \kern #1pt
			\vrule width .#2pt}\hrule height .#2pt }}
\def\square{\sqr74}
\def\endproof{\hphantom{MM}\hfill\llap{$\square$}\goodbreak}

\newtheorem{theorem}{Theorem}[section]
\newtheorem{corollary}[theorem]{Corollary}
\newtheorem{definition}[theorem]{Definition}
\newtheorem{remark}[theorem]{Remark}
\newtheorem{lemma}[theorem]{Lemma}
\newtheorem{proposition}{Proposition}[section]

\begin{document}
	\title{\bf Wave breaking for   perturbed Burgers equations}\vs
	\author{\it  Ethan Botelho, Khai T. Nguyen, and  Madhumita Roy \\
		\\
		{\small Department of Mathematics, North Carolina State University}\\
		{\small e-mails: embotelh@ncsu.edu,~ khai@math.ncsu.edu,~ mroy5@ncsu.edu}}
	\maketitle
	\begin{abstract}	
		We establish a simple and explicit criterion for wave breaking for a general class of perturbed Burgers equations that cover several Burgers-type models, including the Fractional KdV equation, the Whitham equation, and the Fornberg-Whitham equation. The proof is both rigorous and straightforward.
		\quad\\
		\quad\\
		{\footnotesize
			{\bf Keywords.}  Whitham equation,  fractional KdV equation, Fornberg-Whitham equation, perturbed Burgers equations, wave breaking
			\medskip
			
			\n {\bf AMS Mathematics Subject Classification.}   76B15, 76B03, 35S30, 35A20
		}
	\end{abstract}
	
	\section{Introduction}
	\label{sec:1}
	We study a class of perturbed Burgers-type equations of the form
	\begin{equation}\label{BG}
		u_t(t,x) + \left( \frac{u^2(t,x)}{2} \right)_x = {\bf N}[u(t,\cdot)](x), 
		\qquad 
		u(0,\cdot) = \bar{u},
	\end{equation}
	where the source term \( {\bf N}: H^1(\mathbb{R}) \to {\bf L}^2(\mathbb{R}) \) is a bounded linear operator satisfying:
	
	\begin{itemize}
		\item[{\bf (A1)}] For every \( g \in \mathcal{C}^1(\mathbb{R})\cap  H^1(\mathbb{R}) \),
		\[
		{\bf N}[g]'(x) = {\bf N}[g'](x), 
		\qquad 
		\int_{\mathbb{R}} {\bf N}[g](x) g(x)\, dx = 0.
		\]
		
		\item[{\bf (A2)}] There exist constants \( \eta_0 \in (0,\infty] \), \( \alpha_i>0 \), and \( \lambda_i \ge 0 \) for \( i=1,2,3 \) such that
		\[
		\|{\bf N}[g]\|_{{\bf L}^{\infty}(\R)} 
		\le 
		\lambda_1 \eta^{-\alpha_1} \|g\|_{{\bf L}^{\infty}(\R)} 
		+ \lambda_2 \eta^{\alpha_2} \|g'\|_{{\bf L}^{2}(\R)} 
		+ \lambda_3 \eta^{\alpha_3} \|g'\|_{{\bf L}^{\infty}(\R)}
		\]
		for all \( \eta \in (0, \eta_0) \) and \( g \in H^2(\mathbb{R}) \).
	\end{itemize}
	These assumptions cover a broad family of operators, ranging from singular to weakly singular kernels. With suitable choices of $(\alpha_1, \alpha_2, \alpha_3)$, equation \eqref{BG} recovers several important nonlocal Burgers-type models where the interplay between nonlinear steepening and nonlocal effects is central such as \cite{BiH, L, BN, BZ, BNK, CE, FN, GN, G, HI, HIDT, Hu, NS, RY, W}. In particular, it includes well-known nonlocal weakly  dispersive perturbations of the Burgers equation:
	\begin{itemize}
		\item  {\bf Fractional KdV equation  \cite{Hu1, SW}}. For some $\alpha\in (-1,-2/5)$, it holds
		\[
		{\bf N}[u(t,\cdot)](x)~=~|D|^{\alpha}u_x(t,x)~=~\int_{\R}{\mathrm{sgn}(y)\over |y|^{2+\alpha}} \big[u(t,x)-u(t,x-y)\big]dy.
		\] 
		\item {\bf Whitham equation \cite{ MLQ, W}.} Considering the kernel 
		\[
		K(x)~=~\mathcal{F}^{-1}[m](x)~\doteq~{1\over 2\pi}\int_{\R}e^{ix\xi}m(\xi)d\xi,\qquad m(\xi)~=~\sqrt{{\tanh \xi\over \xi}},
		\]
		the operator ${\bf N}$ is defined by 
		\[
		{\bf N}[u(t,\cdot)]~=~K*u_x(t,\cdot).
		\]
		\item {\bf Fornberg-Whitham equation  \cite{SW1}.} For some $s>0$, the operator ${\bf N}$ is 
		the Bessel potential of order $s>0$ such that 
		\[
		{\bf N}[u(t,\cdot)]~=~-(I-\partial^2_x)^{-s/2}*u_x(t,\cdot)~=~-G_s*u_x(t,\cdot),
		\]
		where  the Bessel kernel $G_s$  has the formula
		\[
		G_s(x)~=~{1\over 2\sqrt{\pi}\Gamma(s/2)}\int_{0}^{\infty}t^{\frac{s-3}{2}}\cdot e^{-{|x|^2\over 4t}-t}dt,
		\]
		with $\Gamma$ being the Gamma  function.
	\end{itemize}
	While the classical Burgers equation forms steep gradients in finite time, the presence of a nonlocal source may slow down, speed up, or prevent this process. 
	\begin{definition}
		The Cauchy problem \eqref{BG} exhibits \emph{wave breaking} (or \emph{shock formation}) at time \( T^*>0 \) if it admits a unique classical solution \( u \) on \( [0,T^*) \times \mathbb{R} \) such that
		\[
		\limsup_{t \to T^{*-}}\left( \inf_{x \in \mathbb{R}} u_x(t,x)\right)~ =~ -\infty.
		\]
	\end{definition}
	In this paper, our aim is to provide simple, verifiable conditions under which the nonlocal term ${\bf N}$ allows solutions to form shocks. 
	In Theorem~\ref{Main}, for every sufficiently small parameter $\theta>0$, we establish explicit criteria for wave breaking in terms of the slope of the initial data $\bar{u}$ and the ${\bf L}^2$-norms of its derivatives.  These criteria yield bounds on the lifespan of classical solutions and identify regimes in which nonlinear steepening dominates the nonlocal effects. More precisely, we obtain  that the wave breaking time $T^*$ is bounded by 
	\begin{equation*}
		\frac{1}{(1+\theta)}\cdot\frac{1}{-\displaystyle\inf_{x\in\mathbb{R}} \bar{u}'(x)}
		\;\leq\;
		T^*
		\;\leq\;
		\frac{1}{(1-\theta)}\cdot\frac{1}{-\displaystyle\inf_{x\in\mathbb{R}} \bar{u}'(x)}.
	\end{equation*}
	The proof of Theorem \ref{Main} is subtle yet direct and is based on analyzing the blow-up of the quantity
	\[
	m(t) ~\doteq~ -\ds\inf_{x\in\mathbb{R}} u_x(t,x).
	\]
	By tracking $u_x$ along the characteristic curves of \eqref{BG} and differentiating the equation with respect to $x$, we derive a closed system of ODEs for 
	$m(t)$ and the ${\bf L}^2$-norms of $u_x(t,\cdot)$, $u_{xx}(t,\cdot)$, and $u_{xxx}(t,\cdot)$.
	This system captures the mechanism of gradient amplification and allows us to determine an interval in which wave breaking occurs, linking it to the initial slope and to the influence of the nonlocal term.
	
	In the remaining part of the paper, we apply our main result, Theorem~\ref{Main}, to establish wave-breaking criteria in Proposition~\ref{L1-ker} for a class of nonlocal dispersive equations of the form
	\[
	u_t + u u_x = K*u,
	\]
	where the lower-order source term consists of a spatial convolution with an odd, integrable function $K:\mathbb{R}\to\mathbb{R}$. This class includes, in particular, the Burgers-Poisson equation \cite{GN, G}. For the case of nonintegrable kernels $K$, we also derive, in Section~\ref{S3}, simpler and more explicit  wave-breaking criteria than those obtained in \cite[Theorem~2.4]{SW} for the fractional Korteweg-de Vries equation (see Proposition~\ref{fKdV}), in \cite[Theorem~2.3]{SW} for the Whitham equation (see Proposition~\ref{Whitham}), and in \cite[Theorems~2.1 and~2.2]{SW1} for the Fornberg-Whitham equation (see Proposition~\ref{FW}).
	
	\section{Main results}
	\setcounter{equation}{0}
	In this section, we study wave breaking for  a class of  perturbed Burgers equations (\ref{BG}) under the assumptions {\bf (A1)}-{\bf (A2)} on the nonlocal operator ${\bf N}$. For given constants $\alpha_i$ and $\lambda_i$ in {\bf (A2)}, we define
	\bel{al}
	\bar{\alpha}_2~\doteq~{\alpha_1\over 2\alpha_1+\alpha_2},\qquad \bar{\alpha}_3~\doteq~{2\alpha_1\over 5\alpha_1+3\alpha_3},\qquad \theta_0~\doteq~\min\left\{{2\alpha_2-\alpha_1\over 4\alpha_1+2\alpha_2},{3\alpha_3-2\alpha_1\over 5\alpha_1+3\alpha_3 }\right\},
	\eeq
	\bel{Cons}
	C_2~\doteq~3^{1-\bar{\alpha}_2}\lambda^{1-2\bar{\alpha}_2}_1\lambda_2^{\bar{\alpha}_2},\qquad C_3~\doteq~3^{1-\bar{\alpha}_3}\lambda_1^{{2-5\bar{\alpha}_3\over 2}}\big(C_{GN}\lambda_3\big)^{^{{3\bar{\alpha}_3\over 2}}},
	\eeq
	where $C_{GN}$ is the optimal constant in the Gagliardo--Nirenberg interpolation inequality:
	\begin{equation}\label{C-GN}
		C_{GN} \;\doteq\;
		\sup_{0\neq f\in H^2(\R)}
		\frac{\|\partial_x f\|_{{\bf L}^{\infty}(\mathbb{R})}}
		{ \|f\|_{{\bf L}^{\infty}(\mathbb{R})}^{1/3}
			\,\|\partial_x^{2} f\|_{{\bf L}^{2}(\mathbb{R})}^{2/3}}.
	\end{equation}
	Our main result is stated below.
	\begin{theorem}\label{Main} 
		Assume that the linear operator ${\bf N}$ satisfies {\bf (A1)}-{\bf (A2)} for $\alpha_1<\min\{2\alpha_2, 3\alpha_3/2\}$. Then for every $\theta\in (0,\theta_0] $ and $\bar{u}\in H^3(\R)$ with $\gamma_{\bar{u}}\doteq -\ds\inf_{x\in \R}\bar{u}'(x)/\|\bar{u}'\|_{{\bf L}^\infty(\R)}>0$ such that 
		\bel{cond-u0}
		-\inf_{x\in \R}\bar{u}'(x)~>~\max\left\{{3 \lambda_1\over \theta\gamma_{\bar{u}}\eta_0^{\alpha_1}},{C_2\over \theta^{1-\bar{\alpha}_2}\gamma_{\bar{u}}^{1-2\bar{\alpha}_2}}\cdot \|\bar{u}''\|^{\bar{\alpha}_2}_{{\bf L}^2(\R)}+{C_3\over \theta^{1-\bar{\alpha}_3}\gamma_{\bar{u}}^{1-2\bar{\alpha}_3}}\cdot \|\bar{u}'''\|^{\bar{\alpha}_3}_{{\bf L}^2(\R)}\right\},
		\eeq
		the Cauchy problem (\ref{BG}) exhibits wave breaking at some time $T^*>0$ such that 
		\bel{b-T*}
		{1\over (1+\theta)}\cdot {1\over -\ds\inf_{x\in \R}\bar{u}'(x)}\,~\leq~T^*~\leq~{1\over (1-\theta)}\cdot {1\over -\ds\inf_{x\in \R}\bar{u}'(x)}\,.
		\eeq
	\end{theorem}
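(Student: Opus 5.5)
We follow the quantity $m(t)\doteq -\inf_x u_x(t,x)$ together with the norms $E_k(t)\doteq \|\partial_x^k u(t,\cdot)\|_{{\bf L}^2(\R)}$ for $k=1,2,3$. The goal is to show that $m$ satisfies a Riccati inequality
\[
(1-\theta)\,m^2 \;\le\; \dot m \;\le\; (1+\theta)\,m^2 ,
\]
valid as long as the classical solution exists. Integrating it gives
\[
\frac{m_0}{1-(1-\theta)m_0t}\;\le\; m(t)\;\le\; \frac{m_0}{1-(1+\theta)m_0t},
\]
where $m_0=-\inf\bar u'$. This yields blow-up no later than $1/((1-\theta)m_0)$ and no blow-up before $1/((1+\theta)m_0)$. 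We use standard local well-posedness in $H^3$ together with the continuation criterion: the $H^3$ solution persists while $\|u_x\|_{{\bf L}^\infty}$ stays bounded. This gives the lower bound on $T^*$ and the identification of $T^*$ as the wave-breaking time.

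\emph{Step 1 (energy estimates).} Differentiate \eqref{BG} $k$ times in $x$ and pair with $\partial_x^k u$. Since ${\bf N}$ commutes with $\partial_x$ and is skew by {\bf (A1)}, the source contributes nothing. The Burgers terms, after integration by parts, give
\[
\dot E_1 \le \tfrac12\|u_x\|_\infty E_1,\qquad \dot E_2\le \tfrac32\|u_x\|_\infty E_2,\qquad \dot E_3\le \tfrac52\|u_x\|_\infty E_3 .
\]
For $E_3$ there is a commutator term $u_{xx}u_{xxx}$ times $u_{xx}$. We handle it by exact cancellation: $\int(\tfrac{u^2}{2})_{xxxx}u_{xxx}$ reduces to $\tfrac52\int u_x u_{xxx}^2$ plus $\int u_{xx}u_{xxx}\,\cdot$ terms, and $\int u_{xx}^2u_{xxx}=0$. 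The sign convention is immaterial since we bound by $\|u_x\|_\infty$.

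\emph{Step 2 (bootstrap on $\|u_x\|_\infty$).} We need $\|u_x\|_\infty\le m/\gamma_{\bar u}$, i.e. that the negative slope dominates. Along characteristics $\dot x=u$, the quantity $w=u_x$ satisfies $\dot w=-w^2+{\bf N}[u_x]$. By {\bf (A2)} applied to $g=u_x$,
\[
|{\bf N}[u_x]|\le \lambda_1\eta^{-\alpha_1}\|u_x\|_\infty+\lambda_2\eta^{\alpha_2}E_2+\lambda_3\eta^{\alpha_3}\|u_{xx}\|_\infty ,
\]
and Gagliardo--Nirenberg gives $\|u_{xx}\|_\infty\le C_{GN}\|u_x\|_\infty^{1/3}E_3^{2/3}$. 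The maximum of $u_x$ grows at most by $|{\bf N}|$ while $m$ grows like $m^2$, so a continuity argument keeps $\max u_x\le m(1-\gamma)/\gamma$ type bounds. The simplest route is to show $\|u_x\|_\infty(t)\le m(t)/\gamma_{\bar u}$ by comparing both with the Riccati solution. I would then choose $\eta$ so that each of the three error terms is at most $\tfrac{\theta}{3}m^2$. The first term requires $\eta^{\alpha_1}\approx 3\lambda_1/(\theta\gamma m)$, which is admissible ($\eta<\eta_0$) by the first condition in \eqref{cond-u0}.

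\emph{Step 3 (closing the estimate, the main obstacle).} With this $\eta$, the remaining terms are $\lambda_2\eta^{\alpha_2}E_2$ and $\lambda_3\eta^{\alpha_3}C_{GN}(m/\gamma)^{1/3}E_3^{2/3}$. By Step 1, $E_2(t)\le E_2(0)\exp(\tfrac32\int\|u_x\|_\infty)$, and Riccati comparison gives $\exp\int m\approx (m/m_0)^{1/(1\mp\theta)}$. Hence $E_2\lesssim E_2(0)(m/m_0)^{3/(2\gamma(1-\theta))}$-type growth, polynomial in $m$. The requirement $\lambda_2\eta^{\alpha_2}E_2\le \tfrac\theta3 m^2$ then becomes a power comparison. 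It holds uniformly in time precisely when the exponent of $m$ on the left does not exceed $2$; this is where $\alpha_1<2\alpha_2$ and $\theta\le\theta_0$ enter, the definition of $\theta_0$ being exactly that exponent inequality. At $t=0$ it reduces to the $C_2$ part of \eqref{cond-u0}, with $\bar\alpha_2$ arising from solving for $m_0$. The $E_3$ term is treated identically, giving $\bar\alpha_3$, $C_3$ and $\alpha_1<3\alpha_3/2$. The delicate point is tracking the exponents (including the factor $\gamma$ and whether $\|u_x\|_\infty$ or $m$ appears in the energy growth) so that they match \eqref{al}. I would set up a single bootstrap hypothesis on $[0,T)$ containing the Riccati bounds and the $E_k$ bounds, and verify that it is strictly preserved. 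This yields the two-sided Riccati inequality and hence \eqref{b-T*}.
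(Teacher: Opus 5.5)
\textbf{There is a genuine gap, in Steps 1 and 3, at the point you yourself flag as ``delicate.''}

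First, the energy rates are wrong. We have $\partial_x^2(uu_x)=uu_{xxx}+3u_xu_{xx}$ and $\partial_x^3(uu_x)=uu_{xxxx}+4u_xu_{xxx}+3u_{xx}^2$. This gives the exact identities $\frac{d}{dt}\|u_{xx}\|^2_{{\bf L}^2}=-5\int u_xu_{xx}^2$ and $\frac{d}{dt}\|u_{xxx}\|^2_{{\bf L}^2}=-7\int u_xu_{xxx}^2$. So the growth rates of $E_2,E_3$ are $\frac52,\frac72$, not $\frac32,\frac52$. Since the ratio $-\int u_xu_{xx}^2/\int u_{xx}^2$ can be made arbitrarily close to $\|u_x\|_{{\bf L}^\infty}$, your inequalities are false, not merely lossy.

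Second, and more seriously, you bound the rate by $\|u_x\|_{{\bf L}^\infty}\le m/\gamma$. This puts $1/\gamma$ into the exponent, as in your own $(m/m_0)^{3/(2\gamma(1-\theta))}$. The power comparison in Step 3 then becomes $\frac{5\bar\alpha_2}{2}\le \gamma(1-\theta)$, and similarly with $\frac{7\bar\alpha_3}{2}$. This fails for small $\gamma_{\bar u}$ and is not the condition $\theta\le\theta_0$. The theorem holds for every $\gamma_{\bar u}>0$, with $\gamma_{\bar u}$ appearing only in the constants of \eqref{cond-u0}. It should enter only through $\lambda_1\eta^{-\alpha_1}\|u_x\|_{{\bf L}^\infty}$ and the factor $\|u_x\|^{1/3}_{{\bf L}^\infty}$ coming from Gagliardo--Nirenberg. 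As written, your bootstrap does not close.

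\textbf{The missing idea is to use the sign in the identities.} Since $-u_x\le m$ pointwise, you get $-5\int u_xu_{xx}^2\le 5m\|u_{xx}\|^2_{{\bf L}^2}$ and $-7\int u_xu_{xxx}^2\le 7m\|u_{xxx}\|^2_{{\bf L}^2}$. Only the most negative slope drives the higher norms.

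With this, your integrated-Riccati bootstrap does work:
\begin{itemize}
\item Inside the bootstrap, $\dot m\ge(1-\theta)m^2$ gives $\int_0^t m\le\frac{1}{1-\theta}\ln(m/m_0)$.
\item Hence $E_2\le E_2(0)(m/m_0)^{5/(2(1-\theta))}$ and $E_3\le E_3(0)(m/m_0)^{7/(2(1-\theta))}$.
\item The exponent conditions become $\frac{5\bar\alpha_2}{2}\le 1-\theta$ and $\frac{7\bar\alpha_3}{2}\le 1-\theta$. These are exactly $\theta\le\theta_0$, because $1-\theta_0=\max\{\frac{5\bar\alpha_2}{2},\frac{7\bar\alpha_3}{2}\}$.
\item At $t=0$ the requirement reduces to $m_0$ exceeding each of the two terms in \eqref{cond-u0}, which the hypothesis implies.
\end{itemize}

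This would be a legitimate alternative to the paper's route. The paper avoids integrating the Riccati inequality inside the bootstrap: it packs both norms into $Z_\ve$, shows $\dot Z_\ve\le(1-\theta_0)mZ_\ve$, and deduces that $m-Z_\ve$ is nondecreasing.

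Your Step 2 is also only sketched. The clean argument is as follows. When $\|u_x\|_{{\bf L}^\infty}>m$, the supremum is attained at positive values of $u_x$. Along characteristics these satisfy $\frac{d}{dt}u_x\le -u_x^2+\theta m^2<0$. So $\|u_x\|_{{\bf L}^\infty}$ cannot increase there while $m$ is nondecreasing, and $\gamma\|u_x\|_{{\bf L}^\infty}\le m$ persists.
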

	{\bf Proof.} {\bf 1.} We first note that for a given  $\bar{u}\in H^3(\R)$, the above Cauchy problem (\ref{BG}) admits  a unique classical solution $u\in \mathcal{C}([0,t),H^3(\R))$ for some $t>0$ from \cite{Kato}. Hence, we denote
	\bel{T^*}
	T^*~=~\sup\{t\geq 0:(\ref{BG})~\mathrm{admits~a~unique~solution~}u\in \mathcal{C}([0,t],H^{3}(\R))\}~>~0.
	\eeq
	Using~(\ref{BG}) and~{\bf (A1)}, we obtain
	\[
	\frac{d}{dt}\int_{\mathbb{R}} u^{2}(t,x)\, dx
	~=~ 2
	\int_{\mathbb{R}} {\bf N}[u(t,\cdot)](x)\, u(t,x)\, dx~=~0,
	\]
	and this yields
	\begin{equation}\label{L-2}
		\|u(t,\cdot)\|_{{\bf L}^{2}(\mathbb{R})}^{2}
		~=~
		\|\bar{u}\|_{{\bf L}^{2}(\mathbb{R})}^{2},
		\qquad t\in [0,T^*].
	\end{equation}
	Differentiating \eqref{BG} with respect to $x$ and multiplying by $u_x$, we obtain
	\bel{ux}
	(u_x)_t+u u_{xx}+u^2_x~=~ {\bf N}[u_{x}],\qquad {d\over dt}\big(u^2_x\big)+ \big(u u^2_x\big)_x+ u_x^{3}~=~2 {\bf N}[u_{x}] u_x,
	\eeq
	which implies
	\bel{u-x-L2}
	{d\over dt}\int_{\R} u^2_xdx~=~-\int_{\R} \left( u^3_x - 2 {\bf N}[u_x] u_x \right) \,dx~=~-\int_{\R} u^3_x \, dx.
	\eeq
	Similarly, differentiating \eqref{BG} twice (respectively, three times) with respect to $x$ and multiplying by $u_{xx}$ (respectively, $u_{xxx}$), we obtain
	\begin{equation}\label{u-xx-L2}
		\frac{d}{dt}\int_{\mathbb{R}} u_{xx}^2\,dx ~=~-5\int_{\mathbb{R}} u_x\,u_{xx}^2\,dx,
		\qquad
		\frac{d}{dt}\int_{\mathbb{R}} u_{xxx}^2\,dx ~=~ -7\int_{\mathbb{R}} u_x\,u_{xxx}^2\,dx.
	\end{equation}
	For convenience, for every $t\in [0,T^*)$, we introduce the quantities
	\bel{m}
	m(t)~\doteq~-\inf_{x\in \R} u_x(t,x),\qquad M(t)~\doteq~\|u_x(t,\cdot)\|_{{\bf L}^{\infty}(\R)},
	\eeq
	and 
	\bel{z12}
	z_0(t)~\doteq~\|u(t,\cdot)\|_{{\bf L}^2(\R)}^{{2}},\qquad z_1(t)~\doteq~\|u_x(t,\cdot)\|_{{\bf L}^2(\R)}^{{2}}.
	\eeq
	From (\ref{L-2})-(\ref{u-x-L2}), it holds
	\bel{z2}
	z_0(t)~=~\|\bar{u}\|^2_{{\bf L}^2(\R)},\qquad \dot{z}_1(t)~\leq~m(t) z_1(t).
	\eeq
	Moreover, let $(z_2,z_3)$ be the solution of
	\bel{z3}
	\begin{cases}
		\dot{z}_2(t)~=~5m(t)z_2(t),\qquad z_2(0)~=~\| \bar{u}'' \|^2_{{\bf L}^2(\R)},\\[2mm]
		\dot{z}_3(t)~=~7m(t)z_3(t),\qquad z_3(0)~=~\| \bar{u}''' \|^2_{{\bf L}^2(\R)}.
	\end{cases}
	\eeq
	By a standard comparison principle, we have 
	\[
	\|u_{x x}(t,\cdot)\|^2_{{\bf L}^2(\R)}~\leq~z_2(t),\qquad \|u_{x x x}(t,\cdot)\|^2_{{\bf L}^2(\R)}~\leq~z_3(t).
	\]
	Hence, as long as $m$ remains bounded, the $z_i$'s stays bounded for $i=0,1,2,3$. Consequently, $\|u(t,\cdot)\|_{H^3(\mathbb{R})}$ also remains bounded, which implies that   $T^*$ defined in (\ref{T^*}) corresponds to the wave breaking time.
	\v
	
	{\bf 2.} For every $\beta\in \R$, let $t\mapsto \xi(t;\beta)$ be the characteristic starting from $\beta$ at time $t=0$ which solves the ODE
	$$
	\dot{\xi}(t;\beta)~=~ u(t,\xi(t;\beta)),\qquad \xi(0;\beta)~=~\beta.
	$$
	The map $t\mapsto v(t;\beta)\doteq -u_x(t,\xi(t;\beta))$ is thus the solution of 
	\bel{v-b}
	\begin{cases}
		\dot{v}(t;\beta)~=~v^2(t;\beta)-{\bf N}[u_x(t,\cdot)](\xi(t;\beta)),\qquad t\in (0,T^*),\\[2mm]
		v(0;\beta)~=~-\bar{u}'(\beta),
	\end{cases}
	\eeq
	and this satisfies 
	\bel{m-v}
	\sup_{\beta\in \R}v(t;\beta)~=~m(t),\qquad \sup_{\beta\in \R}v^2(t ;\beta)~=~M^2(t)\qquad\forall t\in [0,T^*).
	\eeq
	By the assumption {\bf (A2)} and (\ref{C-GN}), we get 
	\bel{k-ex}
	\begin{split}
		\big|\dot{v}(t;\beta)-v^2(t;\beta)\big|&~\leq~\lambda_1\eta^{-\alpha_1}M(t)+\lambda_2\eta^{\alpha_2}z^{1/2}_2(t)+\lambda_3\eta^{\alpha_3}\|\partial^2_xu(t,\cdot)\|_{{\bf L}^\infty(\R)}\\[2mm]
		&~\leq~\lambda_1\eta^{-\alpha_1}M(t)+\lambda_2\eta^{\alpha_2}z^{1/2}_2(t)+C_{GN}\lambda_3 \eta^{\alpha_3} M^{1/3}(t)z_3^{1/3}(t).
	\end{split}
	\eeq
	For every $0<\ve<\gamma_{\bar{u}}$, we set 
	\bel{gamm-ve}
	0~<~\gamma_{\ve}~\doteq~ \gamma_{\bar{u}}-\ve~<~{m(0)\over M(0)}.
	\eeq
	From (\ref{cond-u0}) and the Lipschitz continuity  of $m$ and $M$, for $\ve>0$ sufficiently small, it holds
	\bel{T-1}
	T_{1,\ve}~\doteq~\sup\left\{t\in [0,T^*): {m(\tau)\over M(\tau)}> \gamma_{\ve},\left({\theta\gamma_{\ve}m(\tau)\over 3\lambda_1}\right)^{-1/\alpha_1} <\eta_0~~\forall \tau\in [0,t] \right\}~>~0.
	\eeq
	In particular, for every $t\in [0,T_{1,\ve})$, by choosing 
	\bel{eta}
	\eta~=~\left({\theta\gamma_{\ve}\over 3\lambda_1}\right)^{-1/\alpha_1}m^{-1/\alpha_1}(t),
	\eeq
	we derive from (\ref{k-ex}) that 
	\bel{k-ex1}
	\begin{split}
		\big|\dot{v}(t;\beta)-v^2(t;\beta)\big|&~\leq~{\theta \gamma_{\ve}M(t)\over 3} m(t)+~\lambda_2\left({3\lambda_1\over \theta\gamma_{\ve}}\right)^{\alpha_2\over\alpha_1} { z_2^{{1\over 2}}(t)\over m^{{\alpha_2\over\alpha_1}}(t)}
		+C_{GN}\lambda_3\left({3\lambda_1\over \theta\gamma_{\ve}}\right)^{\alpha_3\over\alpha_1}{z_3^{{1\over 3}}(t)M^{{1\over 3}}(t)\over m^{{\alpha_3\over\alpha_1}}(t)}\\[2mm]
		&~\leq~{\theta m^{2}(t) \over 3} + {\Lambda_{2,\ve} z_2^{1/2}(t)\over \left[m(t)\right]^{\alpha_2/\alpha_1}}
		+{\Lambda_{3,\ve}z_3^{1/3}(t) \over \left[m(t)\right]^{\alpha_3/\alpha_1-1/3}}
	\end{split}
	\eeq
	with the constants 
	\bel{const}
	\Lambda_{2,\ve}~\doteq~\lambda_2\left({3\lambda_1\over \theta\gamma_{\ve}}\right)^{\alpha_2\over\alpha_1}\qquad\mathrm{and}\qquad \Lambda_{3,\ve}~\doteq~C_{GN}\lambda_3\left({3\lambda_1\over \theta}\right)^{\alpha_3\over\alpha_1}\left({1\over \gamma_{\ve}}\right)^{{\alpha_3\over \alpha_1}+{1\over 3}}.
	\eeq
	In the following steps, we shall use the ODEs (\ref{z3}) and (\ref{k-ex1}) to show that $m$ goes to $+\infty$ in finite time, and this yields the wave breaking for (\ref{BG}).
	\medskip
	
	{\bf 3.} Recalling the constants $\bar{\alpha}_2$ and $\bar{\alpha}_3$ defined in (\ref{al}), we introduce the increasing map $[0,T^*]\ni t\mapsto Z_{\ve}(t)$ defined by 
	\bel{Z}
	Z_{\ve}(t)~\doteq~\left({3\Lambda_{2,\ve} \over\theta }\right)^{\bar{\alpha}_2}\cdot z^{{\bar{\alpha}_2\over 2}}_2(t)+\left({3\Lambda_{3,\ve} \over\theta }\right)^{3{\bar{\alpha}_3}\over 2}\cdot z_3^{{\bar{\alpha}_3}\over 2}(t),
	\eeq
	such that
	\bel{con-Z}
	z_2^{1/2}(t)~\leq~{\theta\over 3\Lambda_{2,\ve}}\cdot Z_{\ve}^{{1\over \bar{\alpha}_2}}(t),\qquad z_3^{1/3}(t)~\leq~{\theta\over 3\Lambda_{3,\ve}}\cdot Z_{\ve}^{{2\over 3\bar{\alpha}_3}}(t).
	\eeq
	Using (\ref{al})-(\ref{C-GN}), (\ref{z3}),  (\ref{const})-(\ref{Z}), and $\theta<\theta_0$, we compute 
	\bel{Z'}
	\begin{split}
		\dot{Z}_{\ve}(t)&~=~{5m(t)\bar{\alpha}_2\over 2}\left({3\Lambda_{2,\ve} \over\theta }\right)^{\bar{\alpha}_2}\cdot z^{{\bar{\alpha}_2\over 2}}_2(t)+{7m(t)\bar{\alpha}_3\over 2}\left({3\Lambda_{3,\ve} \over\theta }\right)^{3{\bar{\alpha}_3}\over 2}\cdot z_3^{{\bar{\alpha}_3}\over 2}(t)\\[2mm]
		&~\leq~\max\left\{{5\bar{\alpha}_2\over 2}, {7\bar{\alpha}_3\over 2}\right\}\cdot m(t)Z_{\ve}(t)~\leq~(1-\theta_0)m(t)Z_{\ve}(t),\qquad t\in (0,T^*),
	\end{split}
	\eeq
	and
	\[
	\begin{split}
		Z_{\ve}(0)&~=~\left({3\Lambda_{2,\ve} \over\theta }\right)^{\bar{\alpha}_2} \cdot \|\partial_x^2 \bar{u} \|^{\bar{\alpha}_2}_{{\bf L}^2(\R)}+\left({3\Lambda_{3,\ve} \over\theta }\right)^{3{\bar{\alpha}_3}\over 2}\cdot \|{\partial_x^3 \bar{u}} \|^{\bar{\alpha}_3}_{{\bf L}^2(\R)}\\[2mm]
		&~=~{C_2\over \theta^{1-\bar{\alpha}_2} \gamma^{1-2\bar{\alpha}_2}_{\ve}} \cdot \| \bar{u}'' \|^{\bar{\alpha}_2}_{{\bf L}^2(\R)} + {C_3\over \theta^{1-\bar{\alpha}_3} \gamma^{1-2\bar{\alpha}_3}_{\ve}} \cdot \|\bar{u}''' \|^{\bar{\alpha}_3}_{{\bf L}^2(\R)}.
	\end{split}
	\]
	By the assumption (\ref{cond-u0}) and (\ref{gamm-ve}), for $\ve>0$ sufficiently small, we have 
	\[
	m(0)~=~-\inf_{x\in \R}\bar{u}'(x)~>~\left({3\Lambda_{2,\ve} \over\theta }\right)^{\bar{\alpha}_2}\cdot z^{{\bar{\alpha}_2\over 2}}_2(0)+\left({3\Lambda_{3,\ve} \over\theta }\right)^{3{\bar{\alpha}_3}\over 2}\cdot z_3^{{\bar{\alpha}_3}\over 2}(0)~=~Z_{\ve}(0),
	\]
	and the Lipschitz continuity of $m$ implies that 
	\bel{T2}
	T_{2,\ve}~\doteq~\sup\left\{t\in [0,T_{1,\ve}):m(\tau)\geq Z_{\ve}(\tau)~\forall \tau\in [0,t]\right\}~>~0.
	\eeq
	Hence, from (\ref{k-ex1}) and  (\ref{con-Z}), for every $t\in (0,T_{2,\ve})$, it holds
	\bel{k-ex2}
	\begin{split}
		\big|\dot{v}(t;\beta)-v^2(t;\beta)\big|&~\leq~{\theta\over 3} m^2(t)+{\theta\over 3} \left[m(t)\right]^{-{\alpha_2\over \alpha_1}} Z_{\ve}^{{1\over \bar{\alpha}_2}}(t) + {\theta\over 3} \left[m(t)\right]^{{1\over 3}-{\alpha_3\over \alpha_1}} Z_{\ve}^{{2\over 3\bar{\alpha}_3}}(t)\\[2mm]
		&~\leq~{\theta\over 3} m^2(t)+{\theta\over 3} \left[m(t)\right]^{{1\over \bar{\alpha}_2}-{\alpha_2\over \alpha_1}}+{\theta\over 3} \left[m(t)\right]^{{2\over 3\bar{\alpha}_3}+{1\over 3}-{\alpha_3\over \alpha_1}}~=~\theta m^{2}(t).
	\end{split}
	\eeq
	Using (\ref{Z'})-(\ref{k-ex2}), we get
	\[
	\begin{split}
		{d\over dt} \left(v(t;\beta)-Z_{\ve}(t)\right)&~\geq~v^2(t;\beta)-\theta m^2(t)-\dot{Z}_{\ve}(t)~\geq~v^2(t;\beta)-\theta m^2(t)-(1-\theta_0)m(t)Z_{\ve}(t)\\
		&~\geq~v^2(t;\beta)-\theta m^2(t)-(1-\theta_0)m^2(t)~\geq~v^2(t;\beta)-m^2(t).
	\end{split}
	\]
	In particular, for every $\beta\in \R$, $t\in [0,T_{2,\ve})$, and $0\leq s<T_{2,\ve}-t$, one has 
	\[
	\begin{split}
		m(t+s)-Z_{\ve}(t+s)&~\geq~v(t+s;\beta)-Z_{\ve}(t+s)\\
		&~\geq~v(t;\beta)-Z_{\ve}(t)+\int_{t}^{t+s} \left( v^2{(\tau;\beta)}-m^2(\tau) \right)d\tau\\
		&~\geq~v(t;\beta)-Z_{\ve}(t)+ s\cdot \big(v^2(t;\beta)-m^2(t)\big)+o(s),
	\end{split}
	\]
	and (\ref{m-v}) implies 
	\bel{m_Z}
	m(t+s)-Z_{\ve}(t+s)~\geq~m(t)-Z_{\ve}(t)+o(s).
	\eeq
	By the Lipschitz continuity property of the map $t\mapsto m(t)$, we get
	\[
	{d\over dt} \big[m(t)-Z_{\ve}(t)\big]~\geq~0\quad a.e.~t\in [0,T_{2,\ve}],
	\]
	which in particular yields that $t\mapsto m(t)$ is increasing and
	\bel{Ne1}
	m(t)-Z_{\ve}(t)~\geq~ m(0)-Z_{\ve}(0)~>~0\quad\forall t\in [0,T_{2,\ve}].
	\eeq
	Hence, by the definition of $T_{2,\varepsilon}$ in \eqref{T2}, we have $T_{2,\varepsilon} = T_{1,\varepsilon}$, and
	\bel{ve-e}
	\left(\frac{\theta \gamma_{\varepsilon} m(T_{1,\varepsilon})}{3\lambda_1}\right)^{-1/\alpha_1}
	~\le~
	\left(\frac{\theta \gamma_{\varepsilon} m(0)}{3\lambda_1}\right)^{-1/\alpha_1}
	< \eta_0.
	\eeq
	Moreover, from (\ref{k-ex2}), we get
	\begin{equation}\label{kes-1}
		v^2(t;\beta) - \theta m^2(t)
		~\le~
		\dot{v}(t;\beta)
		~\le~
		v^2(t;\beta) + \theta m^2(t),
	\end{equation}
	for all $t \in [0, T_{1,\varepsilon}]$.\medskip

	\noindent\textbf{4.} Next, we show that
	\begin{equation}\label{ce}
		\gamma_{\varepsilon} M(t) ~\le~ m(t)
		\qquad \forall\, t \in [0, T_{1,\varepsilon}].
	\end{equation}
	Arguing by contradiction, assume that
	\begin{equation}\label{t-0}
		t_0 ~\doteq~ \sup \Big\{ t \in [0, T_{1,\varepsilon}] : \gamma_{\varepsilon} M(\tau) \le m(\tau)
		~~\forall\, \tau \in [0,t] \Big\}
		~<~ T_{1,\varepsilon}.
	\end{equation}
	Then
	\begin{equation}\label{dep}
		\sup_{\beta \in \mathbb{R}} \big[ -v(t_0;\beta) \big]
		~=~ M(t_0)
		~=~ \frac{m(t_0)}{\gamma_{\varepsilon}}
		~>~ m(t_0).
	\end{equation}
	Using the first inequality in \eqref{kes-1}, for every $0 \leq s < T_{1, \ve} - t_{0}$, we estimate
	\[
	\begin{aligned}
		M(t_0)
		&\geq~ -v(t_0;\beta)~\ge~ -v(t_0+s;\beta)
		+ \int_{t_0}^{t_0+s} \big( v^2(\tau;\beta) - \theta m^2(\tau) \big)\, d\tau \\[2mm]
		&~\ge~ -v(t_0+s;\beta)+s \big( v^2(t_0+s;\beta) - \theta m^2(t_0+s) \big)
		+ o(s)
	\end{aligned}
	\]
	with $\ds\lim_{s\to 0}o(s)/s=0$. By \eqref{m-v}, it follows that
	\[
	\begin{aligned}
		M(t_0)
		&~\ge~ M(t_0+s)
		+ s \big( M^2(t_0+s) - \theta m^2(t_0+s) \big)
		+ o(s) \\[2mm]
		&~\ge~ M(t_0+s)
		+  (1-\theta)s\, M^2(t_0+s)
		+ o(s).
	\end{aligned}
	\]
	In particular, for some sufficiently small $s_0>0$,  by the increasing property of $m$ proved in the previous step and \eqref{dep}, it holds that
	\[
	\gamma_{\varepsilon} M(t_0+s)
	~\le~ \gamma_{\varepsilon} M(t_0)
	~=~ m(t_0)
	~\le~ m(t_0+s)
	\quad \forall\, s \in [0, s_0),
	\]
	which contradicts \eqref{t-0}. Hence, \eqref{ce} is valid and, together with \eqref{ve-e} and \eqref{T-1}, implies that $T_{1,\varepsilon} = T^*$ and that \eqref{kes-1} holds for a.e. $t \in [0, T^*)$.
	\medskip
	
	\noindent\textbf{5.} Using the same argument as in the previous step, we deduce from the first inequality in \eqref{kes-1} that
	\[
	\dot{m}(t) \ge (1-\theta)\, m^2(t) \quad \text{for a.e.\ } t \in [0,T^*),
	\]
	which implies
	\[
	m(t) \ge \frac{m(0)}{1-(1-\theta)t\, m(0)} \qquad \forall\, t \in [0,T^*).
	\]
	Hence, the upper bound for $T^*$ in \eqref{b-T*} follows.
	
	Finally, using the second inequality in \eqref{kes-1}, for every $0 < t < t+s < T^*$, we obtain
	\[
	\begin{aligned}
		v(t;\beta)
		&~\le~ m(t-s) + \int_{t-s}^{t} \big( v^2(\tau;\beta) + \theta m^2(\tau) \big)\, d\tau \\[2mm]
		&~\le~ m(t-s) + s \big( v^2(t;\beta) + \theta m^2(t) \big) + o(s)\qquad\forall \beta\in \R.
	\end{aligned}
	\]
	Together with \eqref{m-v}, this yields
	\[
	\dot{m}(t)~ \le~ (1+\theta)\, m^{2}(t) \quad \text{for a.e.\ } t \in [0,T^*).
	\]
	Therefore, we get 
	\begin{equation}\label{low-b}
		m(t)~\le~ \frac{m(0)}{1-(1+\theta)t\, m(0)} \qquad \forall t \in [0,T^*),
	\end{equation}
	which gives the lower bound for $T^*$ in \eqref{b-T*}.
	\endproof
	\begin{remark}\label{R1} If the constant $\eta_0$ in {\bf (A2)} is $+\infty$, then the wave breaking condition (\ref{cond-u0}) in Theorem \ref{Main} can be simplified to 
		\[
		-\inf_{x\in \R}\bar{u}'(x)~>~{C_2\over \theta^{1-\bar{\alpha}_2}\gamma_{\bar{u}}^{1-2\bar{\alpha}_2}}\cdot \|\bar{u}''\|^{\bar{\alpha}_2}_{{\bf L}^2(\R)}+{C_3\over \theta^{1-\bar{\alpha}_3}\gamma_{\bar{u}}^{1-2\bar{\alpha}_3}}\cdot \|\bar{u}'''\|^{\bar{\alpha}_3}_{{\bf L}^2(\R)}.
		\]
	\end{remark}
	\begin{remark}
		If the constant $\lambda_3$ in the assumption {\bf (A2)} is $0$, then (\ref{b-T*}) still holds for every initial data $\bar{u}\in H^2(\R)$ satisfying
		\[
		-\inf_{x\in \R}\bar{u}'(x)~>~\max\left\{{3 \lambda_1\over \theta\gamma_{\bar{u}}\eta_0^{\alpha_1}},{C_2\over \theta^{1-\bar{\alpha}_2}\gamma_{\bar{u}}^{1-2\bar{\alpha}_2}}\cdot \|\bar{u}''\|^{\bar{\alpha}_2}_{{\bf L}^2(\R)}\right\}.
		\]
	\end{remark}
	To complete this section, we observe that if  the constant $\alpha_1$ takes the value $0$, then assumption {\bf (A2)} becomes 
	\bel{L1-Kernel}
	\|{\bf N}[g]\|_{{\bf L}^{\infty}(\R)}~\leq~\lambda_1 \|g\|_{{\bf L}^{\infty}(\R)}\qquad\forall g \in H^2(\mathbb{R}).
	\eeq
	Hence, using  Theorem~\ref{Main}, we obtain the following Proposition.
	\begin{proposition}\label{L1-ker} Assume that the linear operator ${\bf N}$ satisfies {\bf (A1)} and (\ref{L1-Kernel}). Then for every $\theta\in (0,1)$ and $\bar{u}\in H^2(\R)$ with 
		\bel{s1}
		-\inf_{x\in \R}\bar{u}'(x)~>~\sqrt{3 \lambda_1\over \theta}\cdot \| \bar{u} '\|_{{\bf L}^\infty(\R)}^{1\over 2},
		\eeq
		the Cauchy problem (\ref{BG}) exhibits wave breaking at some time $T^*>0$ satisfying (\ref{b-T*}).
	\end{proposition}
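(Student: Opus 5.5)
The bound (\ref{L1-Kernel}) is assumption {\bf (A2)} with $\alpha_1=0$, $\lambda_2=\lambda_3=0$ and $\eta_0=+\infty$. Theorem~\ref{Main} needs $\alpha_1>0$, so I will not apply it directly. Instead I will rerun its proof (Steps 1, 2, 4, 5) in this simpler setting, where no interpolation parameter $\eta$ and no auxiliary quantity $Z_\ve$ are needed.

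\textbf{Setup.} Step~1 applies verbatim with $H^2$ in place of $H^3$, using only $z_0,z_1,z_2$ and the first identity in (\ref{u-xx-L2}). Hence $T^*$ is again the blow-up time of $m(t)$. Along characteristics, (\ref{v-b}) and (\ref{L1-Kernel}) give
\[
|\dot v(t;\beta)-v^2(t;\beta)|~\le~\lambda_1 M(t).
\]
I will also use the positive part $w(t;\beta)\doteq u_x(t,\xi(t;\beta))$ and $M_+(t)\doteq\sup_\beta w(t;\beta)$, so that $M=\max\{m,M_+\}$ and $\dot w=-w^2+{\bf N}[u_x](\xi)$.

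\textbf{Bootstrap claim.} I will show
\[
\lambda_1 M(t)~<~\tfrac{\theta}{3}m^2(t)\qquad \forall t\in[0,T^*).
\]
At $t=0$ this is exactly (\ref{s1}). Let $t_0$ be the first time it fails. On $[0,t_0]$ we have $\dot v\ge v^2-\tfrac\theta3 m^2$, and the sup-along-characteristics argument of Step~3 gives $\dot m\ge(1-\tfrac\theta3)m^2>0$ a.e. So $m$ is increasing. In particular $m(t)\ge m(0)$, and (\ref{s1}) with $M(0)\ge m(0)$ gives $m(0)>3\lambda_1/\theta$.

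We now check that $\lambda_1 M_+/m^2$ stays below $\theta/3$; then $\lambda_1 M=\lambda_1\max\{m,M_+\}<\tfrac\theta3 m^2$ follows, since $\lambda_1 m<\tfrac\theta3 m^2$ automatically from $m>3\lambda_1/\theta$. There are two cases.
\begin{itemize}
\item If $M_+\ge m$, then $M=M_+$, and the Step-4 style argument gives $\dot M_+\le -M_+^2+\lambda_1 M_+\le 0$ whenever $M_+\ge\lambda_1$. In that regime $M_+$ does not increase while $m$ increases, so the ratio $\lambda_1 M_+/m^2$ does not grow.
\item If $M_+<\lambda_1$, then $\lambda_1 M_+<\lambda_1^2<\tfrac{\theta^2}{9}m^2<\tfrac\theta3 m^2$.
\end{itemize}
Combining the cases, the strict inequality persists slightly beyond $t_0$, which contradicts the definition of $t_0$.

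\textbf{Conclusion.} On all of $[0,T^*)$ we therefore have $v^2-\theta m^2\le\dot v\le v^2+\theta m^2$; this is (\ref{kes-1}). Step~5 of the proof of Theorem~\ref{Main} then gives
\[
(1-\theta)m^2~\le~\dot m~\le~(1+\theta)m^2\quad\text{a.e.}
\]
Integrating these inequalities yields $m\to\infty$ in finite time, together with the two-sided bound (\ref{b-T*}).

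\textbf{Main obstacle.} The delicate step is the bootstrap for $M$. One must handle the switching between $M=m$ and $M=M_+$ rigorously via one-sided Dini derivatives of suprema over characteristics, as in Step~4. One must also check that the threshold $M_+\ge\lambda_1$, needed for $M_+$ to be nonincreasing, is compatible with the lower bound $m>3\lambda_1/\theta$ extracted from (\ref{s1}).
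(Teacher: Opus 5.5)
Your proof is correct, but it takes a different route from the paper's. The paper does not rerun the argument. It notes that $\eta^{-\varepsilon}\ge 1$ for $\eta\in(0,1)$, so \eqref{L1-Kernel} implies \textbf{(A2)} with $\alpha_1=\varepsilon>0$, $\eta_0=1$, $\lambda_2=\lambda_3=0$, and $\alpha_2,\alpha_3>0$ arbitrary. Theorem~\ref{Main} then applies in its $H^2$ form (since $\lambda_3=0$) with $C_2=C_3=0$. Its hypothesis reduces to $-\inf\bar u'>3\lambda_1/(\theta\gamma_{\bar u})$, which is equivalent to \eqref{s1}. Since $\theta_\varepsilon\to1$ as $\varepsilon\to0$, every $\theta\in(0,1)$ is covered. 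So your concern that the theorem ``needs $\alpha_1>0$'' is removed by this short trick.

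You instead use the unconditional bound $|\dot v-v^2|\le\lambda_1 M$ directly. You replace the paper's apparatus ($\eta$, $\gamma_\varepsilon$, $T_{1,\varepsilon}$, $Z_\varepsilon$) with the single bootstrap $\lambda_1 M<\frac{\theta}{3}m^2$. The mechanism that closes it is the same as in Step~4 of the theorem: $m$ increases, while $M_+$ decreases whenever $M_+\ge m>\lambda_1$.

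What each route buys:
\begin{itemize}
\item The paper's reduction is short and reuses the theorem, at the price of an artificial $\varepsilon$, free exponents $\alpha_2,\alpha_3$, and a limiting argument.
\item Your argument is self-contained and more transparent, and it proves more. Because you obtain $|\dot v-v^2|\le\frac{\theta}{3}m^2$, you get $\frac{1}{(1+\theta/3)m(0)}\le T^*\le\frac{1}{(1-\theta/3)m(0)}$, which is sharper than \eqref{b-T*}.
\item Your bootstrap also closes verbatim with $\theta$ in place of $\theta/3$, for any $\theta\in(0,1)$, because then $M_+\ge m>\lambda_1/\theta>\lambda_1$. So the factor $3$ in \eqref{s1} is only an artifact of the three-term splitting in Theorem~\ref{Main}.
\end{itemize}

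One small slip: your two cases are not exhaustive, since $\lambda_1\le M_+<m$ is missing. That case is trivial, because then $M=m$ and $\lambda_1 m<\frac{\theta}{3}m^2$. The cleaner split is $M_+<m$ versus $M_+\ge m$. At the failure time $t_0$ you in fact have $M(t_0)=M_+(t_0)>m(t_0)$, because $M(t_0)=m(t_0)$ would force $m(t_0)=3\lambda_1/\theta<m(0)\le m(t_0)$. Comparing with a left neighbourhood of $t_0$, where $M_+$ decreases and $m$ increases, then gives the contradiction directly.
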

	{\bf Proof.} Under the assumption that (\ref{L1-Kernel}) holds, for every $\eta \in (0,1)$ and $\varepsilon > 0$, we have
	\[
	\|{\bf N}[g]\|_{{\bf L}^{\infty}(\mathbb{R})}
	\leq
	\lambda_1 \eta^{-\varepsilon}\,\|g\|_{{\bf L}^{\infty}(\mathbb{R})}
	\quad \text{for all } g \in H^2(\mathbb{R}).
	\]
	Hence, applying Theorem~\ref{Main} with $\alpha_1=\varepsilon$, $\alpha_{2} > 0$ and $\alpha_{3} > 0$ free, $\lambda_2=\lambda_3=0$, and $\eta_{0}=1$, we get that for every
	\[
	\theta \in \left(0,\theta_\ve\doteq \min\left\{\frac{2 \alpha_{2}-\varepsilon}{2 \alpha_{2}+4\varepsilon},\,\frac{3 \alpha_{3}-2\varepsilon}{3 \alpha_{3}+5\varepsilon}\right\}\right),
	\]
	and every $\bar{u} \in H^2(\mathbb{R})$ satisfying
	\[
	-\inf_{x\in \mathbb{R}} \bar{u}'(x)
	~>~
	\frac{3\lambda_1}{\theta \gamma_{\bar{u}}},
	\]
	the wave breaking time $T^*$ satisfies (\ref{b-T*}). Since $\displaystyle \lim_{\varepsilon\to 0}\theta_{\varepsilon}=1$ and the above inequality is equivalent to (\ref{s1}), letting $\varepsilon \to 0$ completes the proof.
	\endproof
	\medskip
	
	As a consequence of Proposition~\ref{L1-ker}, we obtain wave-breaking criteria for a class of nonlocal dispersive equations of the form
	\begin{equation}\label{KK}
		u_t(t, x) + \left( \frac{u^2(t, x)}{2} \right)_x = (K*u)(t, x),
		\qquad 
		u(0,\cdot) = \bar{u},
	\end{equation}
	where the lower-order source term consists of a spatial convolution with an odd, integrable function $K:\mathbb{R}\to\mathbb{R}$. Indeed, in this case, we have
	\[
	\|K*g\|_{{\bf L}^{\infty}(\mathbb{R})}
	\leq
	\|K\|_{{\bf L}^{1}(\mathbb{R})}\,\|g\|_{{\bf L}^{\infty}(\mathbb{R})}
	\quad \text{for all } g \in {\bf L}^{\infty}(\mathbb{R}).
	\]\begin{corollary}\label{L-ker} Assume that $K:\R\to\R$ is odd and integrable. Then for every $\theta\in (0,1)$ and $\bar{u}\in H^2(\R)$ with 
		\[
		-\inf_{x\in \R}\bar{u}'(x)~>~\sqrt{3 \|K\|_{{\bf L}^{1}(\R)}\over \theta}\cdot \|\bar{u}'\|_{{\bf L}^\infty(\R)}^{1\over 2},
		\]
		the Cauchy problem (\ref{KK}) exhibits wave breaking at some time $T^*>0$ satisfying (\ref{b-T*}).
	\end{corollary}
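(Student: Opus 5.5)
The plan is to deduce Corollary~\ref{L-ker} directly from Proposition~\ref{L1-ker} with the choice ${\bf N}[g]\doteq K*g$ and $\lambda_1\doteq\|K\|_{{\bf L}^1(\R)}$. The only work is to check the hypotheses: ${\bf N}$ is a bounded linear operator $H^1(\R)\to{\bf L}^2(\R)$, it satisfies {\bf (A1)}, and it satisfies (\ref{L1-Kernel}). Then (\ref{KK}) is exactly (\ref{BG}) with this ${\bf N}$, and the wave-breaking criterion together with (\ref{b-T*}) follows verbatim.

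\textbf{Boundedness and (\ref{L1-Kernel}).} Young's inequality gives $\|K*g\|_{{\bf L}^2}\le\|K\|_{{\bf L}^1}\|g\|_{{\bf L}^2}$, so ${\bf N}$ is bounded from ${\bf L}^2$, hence from $H^1$, into ${\bf L}^2$. The estimate $\|K*g\|_{{\bf L}^\infty}\le\|K\|_{{\bf L}^1}\|g\|_{{\bf L}^\infty}$, already noted before the statement, is (\ref{L1-Kernel}) with $\lambda_1=\|K\|_{{\bf L}^1}$.

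\textbf{Assumption {\bf (A1)}.} For the commutation identity, take $g\in\mathcal{C}^1\cap H^1$. Then $g'\in{\bf L}^2$ and $g'$ is continuous. I would write
\[
(K*g)(x+h)-(K*g)(x)=\int_\R K(y)\big(g(x+h-y)-g(x-y)\big)\,dy=\int_\R K(y)\int_0^h g'(x+s-y)\,ds\,dy ,
\]
and apply Fubini to get $\int_0^h (K*g')(x+s)\,ds$. Since $K*g'$ is continuous (convolution of ${\bf L}^1$ with ${\bf L}^2$ need not be continuous in general, so I would instead argue in the distributional sense: $(K*g)'=K*g'$ as distributions, with both sides in ${\bf L}^2$), the identity holds a.e., and pointwise wherever $K*g'$ is continuous.

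For the orthogonality identity, Fubini is justified because $|K|*|g|\in{\bf L}^2$ and $g\in{\bf L}^2$. Substituting $y\mapsto x-y$ and then swapping $x$ and $y$, oddness of $K$ gives
\[
\int_\R\int_\R K(x-y)g(y)g(x)\,dy\,dx=-\int_\R\int_\R K(y-x)g(y)g(x)\,dy\,dx ,
\]
so the integral equals its own negative and therefore vanishes.

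\textbf{Main obstacle.} The only delicate point is the regularity in {\bf (A1)}: the pointwise form ${\bf N}[g]'={\bf N}[g']$ for merely $\mathcal{C}^1\cap H^1$ functions. In the proofs of Theorem~\ref{Main} and Proposition~\ref{L1-ker}, this identity is used only for $u(t,\cdot)\in H^3$, where $K*u_x$ and $K*u_{xx}$ are continuous and bounded. Continuity holds because the integrand is in ${\bf L}^1*{\bf L}^\infty$ with a uniformly continuous second factor, since $u_x,u_{xx}\in H^1$. In that setting the identity is a classical derivative, and I expect no genuine difficulty. Applying Proposition~\ref{L1-ker} then completes the argument.
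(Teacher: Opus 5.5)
Your proposal is correct and follows the paper's route. The paper obtains the corollary by applying Proposition~\ref{L1-ker} to ${\bf N}[g]=K*g$ with $\lambda_1=\|K\|_{{\bf L}^1(\R)}$, using the Young bound $\|K*g\|_{{\bf L}^\infty(\R)}\le\|K\|_{{\bf L}^1(\R)}\|g\|_{{\bf L}^\infty(\R)}$; your check of {\bf (A1)} (skew-symmetry from the oddness of $K$, commutation with $\partial_x$ distributionally and classically for $H^3$ solutions) supplies details the paper leaves implicit.
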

	
	\section{Applications to nonlocal  weakly dispersive  perturbations of the Burgers equation} \label{S3}
	\setcounter{equation}{0}
	In this section, we apply our main result, Theorem~\ref{Main}, to investigate the wave-breaking phenomenon for a class of nonlocal weakly dispersive perturbations of the Burgers equation of the form (\ref{KK}), where the kernel $K:\mathbb{R}\to\mathbb{R}$ is not integrable. In particular, we establish simpler criteria for wave breaking in several representative models, including the Fractional Korteweg-de Vries equation \cite{Liu}, the Whitham equation \cite{W}, and the Fornberg-Whitham equation \cite{SW1}.
	\medskip
	
	\n {\bf 1. Wave breaking for Fractional KdV equation.} For every $\alpha\in (-1,0)$,  consider the weakly dispersive Fractional KdV equation
	\begin{equation}\label{KDV}
		u_t(t,x) + \left( \frac{u^2(t,x)}{2} \right)_x = |D|^{\alpha}u_x(t,x), 
		\qquad 
		u(0,\cdot) = \bar{u},
	\end{equation}
	with $|D|^{\alpha}$ being the usual Fourier multiplier operator with symbol $|\xi|^{\alpha}$ such that 
	\[
	|D|^{\alpha}u_x(t,x)~=~\int_{\R}{\mathrm{sgn}(y)\over |y|^{2+\alpha}} \big[u(t,x)-u(t,x-y)\big]dy.
	\]
	Introducing the constant
	\bel{C}
	C(\alpha)~\doteq~\sqrt{3} \left( \frac{1 + \alpha}{4} \right)^{\frac{\alpha}{2}} \left( \frac{2C_{GN}}{|\alpha|} \right)^{\frac{1 + \alpha}{2}},
	\eeq
	the wave breaking criterion for \eqref{KDV} can be stated as follows:
	\begin{proposition}\label{fKdV} Assume that $\alpha\in (-1,-2/5)$. Then, for every $\theta\in \big(0,\frac{-2-5\alpha}{5+2\alpha}\big) $ and $\bar{u}\in H^3(\R)$ such that  
		\bel{cond-u0-1}
		-\inf_{x\in \R}\bar{u}'(x)~>~{C(\alpha)\over \theta^{\frac{1}{2}}}\cdot \|\bar{u}'\|_{{\bf L}^{\infty}(\R)}^{ \frac{1}{6}-{\alpha\over 3} }\cdot \|\bar{u}'''\|^{ \frac{1}{3}+\frac{\alpha}{3} }_{{\bf L}^{2}(\R)},
		\eeq
		the Fractional KdV equation (\ref{KDV}) exhibits wave breaking at  time $T^*>0$ satisfying (\ref{b-T*}).
	\end{proposition}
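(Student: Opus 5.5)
The plan is to specialize the argument of Theorem~\ref{Main} to ${\bf N}=|D|^{\alpha}\partial_x$ with $\lambda_2=0$ and $\eta_0=+\infty$. One point matters from the start. Plugging the resulting constants into (\ref{cond-u0}) does \emph{not} give the exponents in (\ref{cond-u0-1}): one would get the power $\bar\alpha_3=\frac{2(1+\alpha)}{5+2\alpha}$ on $\|\bar u'''\|_{{\bf L}^2}$, whereas (\ref{cond-u0-1}) has $\frac{1+\alpha}{3}$. So I would rerun Steps~2--5 of that proof, with $\eta$ optimized at each time, instead of quoting the theorem.

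\emph{Step 1 (verifying (A1)--(A2)).} (A1) is immediate: $|D|^\alpha\partial_x$ is a Fourier multiplier with odd, purely imaginary symbol $i\xi|\xi|^\alpha$. It therefore commutes with $\partial_x$ and is skew-adjoint, so $\int{\bf N}[g]g\,dx=0$. For (A2), fix $\eta>0$ and split the integral at $|y|=\eta$.
\begin{itemize}
\item On $|y|>\eta$, use $|g(x)-g(x-y)|\le 2\|g\|_{{\bf L}^\infty}$. This gives the bound $\frac{4}{1+\alpha}\eta^{-(1+\alpha)}\|g\|_{{\bf L}^\infty}$.
\item On $|y|<\eta$, pair $y$ with $-y$ using the oddness of ${\rm sgn}$. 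The integrand becomes $y^{-2-\alpha}[g(x+y)-g(x-y)]$ for $y\in(0,\eta)$, which is bounded by $2y^{-1-\alpha}\|g'\|_{{\bf L}^\infty}$. This gives the bound $\frac{2}{|\alpha|}\eta^{|\alpha|}\|g'\|_{{\bf L}^\infty}$.
\end{itemize}
Hence (A2) holds with $\alpha_1=1+\alpha$, $\lambda_1=\frac4{1+\alpha}$, $\alpha_3=|\alpha|$, $\lambda_3=\frac2{|\alpha|}$, $\lambda_2=0$ and $\eta_0=\infty$. The hypothesis $\alpha_1<3\alpha_3/2$ is exactly $\alpha<-2/5$. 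Moreover $\frac{3\alpha_3-2\alpha_1}{5\alpha_1+3\alpha_3}=\frac{-2-5\alpha}{5+2\alpha}$, which is the admissible range of $\theta$ in the statement.

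\emph{Step 2 (pointwise estimate with optimized $\eta$).} In (\ref{k-ex}) only two terms remain:
\[
\lambda_1\eta^{-(1+\alpha)}M+C_{GN}\lambda_3\eta^{|\alpha|}M^{1/3}z_3^{1/3}.
\]
Since $\alpha_1+\alpha_3=1$, minimizing over $\eta\in(0,\infty)$ gives a bound
\[
\big|\dot v-v^2\big|~\le~c(\alpha)\,\lambda_1^{|\alpha|}(C_{GN}\lambda_3)^{1+\alpha}\,M^{\frac{1-2\alpha}{3}}(t)\,z_3^{\frac{1+\alpha}{3}}(t),
\]
where $c(\alpha)$ is an explicit combination of powers of $1+\alpha$ and $|\alpha|$. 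I then define
\[
Q(t)\doteq\frac{3}{\theta}\,c(\alpha)\lambda_1^{|\alpha|}(C_{GN}\lambda_3)^{1+\alpha}\,M^{\frac{1-2\alpha}{3}}z_3^{\frac{1+\alpha}{3}}.
\]
The requirement $Q(0)<m^2(0)$, after taking a square root and substituting $z_3(0)=\|\bar u'''\|_{{\bf L}^2}^2$, is precisely (\ref{cond-u0-1}). The explicit constant $C(\alpha)$ should come out of this computation, and tracking it is where I expect the algebra to be fussiest.

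\emph{Step 3 (bootstrap; the main obstacle).} I need $Q(t)\le m^2(t)$ to persist. Then $|\dot v-v^2|\le\theta m^2$, and Steps~4--5 of the proof of Theorem~\ref{Main} apply verbatim: they give monotonicity of $m$, and the bounds $(1-\theta)m^2\le\dot m\le(1+\theta)m^2$ yield (\ref{b-T*}). The difficulty is that $M$ also appears in $Q$. On the bootstrap interval I would use the following growth rates.
\begin{itemize}
\item From (\ref{z3}), $\frac{d}{dt}\log z_3^{(1+\alpha)/3}\le\frac{7(1+\alpha)}{3}m$.
\item Since $\gamma_\ve M\le m$ (Step~4 of the theorem), $M$ is nonincreasing whenever $M>m$. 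Otherwise $\frac{d}{dt}\log M\le(1+\theta)m$, by the upper bound in (\ref{kes-1}) applied at points where $-u_x$ is maximal.
\item $\frac{d}{dt}\log m^2\ge2(1-\theta)m$.
\end{itemize}
Combining these, the log-derivative of $Q/m^2$ is at most
\[
m\Big[\tfrac{(1-2\alpha)(1+\theta)}{3}+\tfrac{7(1+\alpha)}{3}-2(1-\theta)\Big]~=~m\Big[\tfrac{2+5\alpha}{3}+\theta\,\tfrac{7-2\alpha}{3}\Big].
\]
This is $\le 0$ for $\theta$ small, because $2+5\alpha<0$. I expect this to be the delicate step: the sign is robust, but checking that it holds on the full stated range $\theta<\frac{-2-5\alpha}{5+2\alpha}$ may require sharper bookkeeping of the $M$ factor. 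For example, one could bound $M$ by $m/\gamma_\ve$ and absorb the factor into $Z_\ve$, as in (\ref{Z}), instead of differentiating $M$. With the bootstrap closed, a continuity argument as in Step~3 of Theorem~\ref{Main} gives $Q\le m^2$ on $[0,T^*)$, and the conclusion follows.
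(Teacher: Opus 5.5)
\paragraph{Verdict.} There is a genuine gap, and it comes from your opening premise, which is false.

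\paragraph{Theorem~\ref{Main} applies directly.} Use it together with Remark~\ref{R1}, since $\eta_0=+\infty$. Because $\lambda_2=0$, you get $C_2=0$ and may take $\alpha_2$ arbitrarily large, so $\theta_0=\frac{-2-5\alpha}{5+2\alpha}$. The theorem then yields exactly (\ref{cond-u0-1}).

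The exponent mismatch you saw disappears once you remember that $\gamma_{\bar u}=m_0/M_0$ itself contains $m_0$, where $m_0=-\inf\bar u'$ and $M_0=\|\bar u'\|_{{\bf L}^\infty}$. With $\bar\alpha_3=\frac{2+2\alpha}{5+2\alpha}$ you have $1-\bar\alpha_3=\frac{3}{5+2\alpha}$ and $1-2\bar\alpha_3=\frac{1-2\alpha}{5+2\alpha}$. The theorem's condition $m_0>C_3\theta^{\bar\alpha_3-1}\gamma_{\bar u}^{2\bar\alpha_3-1}\|\bar u'''\|^{\bar\alpha_3}_{{\bf L}^2}$ is therefore equivalent to
\[
m_0^{2(1-\bar\alpha_3)}~>~C_3\,\theta^{\bar\alpha_3-1}M_0^{1-2\bar\alpha_3}\|\bar u'''\|^{\bar\alpha_3}_{{\bf L}^2}.
\]
Raising both sides to the power $\frac{5+2\alpha}{6}$ gives the factors $\theta^{-1/2}$, $M_0^{\frac16-\frac\alpha3}$ and $\|\bar u'''\|^{\frac{1+\alpha}{3}}_{{\bf L}^2}$. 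The constant becomes $C_3^{\frac{5+2\alpha}{6}}=\sqrt3\,\lambda_1^{-\alpha/2}(C_{GN}\lambda_3)^{\frac{1+\alpha}{2}}=C(\alpha)$. This algebra, added to your Step~1 (which matches the paper's check of (A1)--(A2)), is the paper's entire proof.

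\paragraph{Your detour does not prove the statement as written.} There are two problems.

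First, the bootstrap in Step~3 does not cover the stated range of $\theta$. Your bracket $\frac{2+5\alpha}{3}+\theta\frac{7-2\alpha}{3}$ is nonpositive only for $\theta\le\frac{-2-5\alpha}{7-2\alpha}$. This is strictly smaller than $\frac{-2-5\alpha}{5+2\alpha}$, because $7-2\alpha>5+2\alpha$. For instance, at $\alpha=-0.7$ the two thresholds are about $0.18$ and $0.42$. On the remaining range the bootstrap fails, so ``the sign is robust'' is not correct.

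The remedy you mention does work. With $M\le m/\gamma_\ve$, the controlled quantity becomes $z_3^{(1+\alpha)/3}/m^{(5+2\alpha)/3}$. Its log-derivative is at most $\frac{m}{3}\bigl[2+5\alpha+\theta(5+2\alpha)\bigr]$, which is $\le 0$ exactly when $\theta\le\theta_0$. However, this argument, together with Step~4 to propagate $\gamma_\ve M\le m$, is just the proof of Theorem~\ref{Main} specialized: it is the function $Z_\ve$.

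Second, Step~2 does not give ``precisely'' (\ref{cond-u0-1}). Optimizing $\eta$ produces the prefactor $c(\alpha)=(1+\alpha)^{-(1+\alpha)}|\alpha|^{\alpha}$, which lies in $(1,2]$. With your normalization $Q=\frac3\theta(\cdots)$, the requirement $Q(0)<m^2(0)$ is (\ref{cond-u0-1}) with $C(\alpha)$ replaced by $\sqrt{c(\alpha)}\,C(\alpha)$. That is a strictly stronger hypothesis than the one in the statement.

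You could fix this by normalizing with $1/\theta$ instead, since you only need $|\dot v-v^2|\le\theta m^2$. By contrast, the paper's non-optimal choice of $\eta$ makes each of the two terms at most $\theta m^2/3$, and this produces exactly $C(\alpha)$.
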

	{\bf Proof.} Consider the operator ${\bf N}: H^1(\mathbb{R}) \to {\bf L}^2(\mathbb{R})$ defined by
	\[
	{\bf N}[g](x)~\doteq~ |D|^\alpha g'(x)~=~\int_{\R}{\mathrm{sgn}(y)\over |y|^{2+\alpha}} \big[g(x)-g(x-y)\big]dy,\qquad g\in H^2(\R).
	\]
	It is straightforward to verify that {\bf N} satisfies assumption {\bf (A1)}. For every $\eta\in (0,\infty)$ and $x\in \R$, we estimate
	\[
	\begin{split}
		|{\bf N}[g](x)|&~\leq~ \left( \int_{|y|\leq\eta} + \int_{|y|>\eta} \right)\frac{\big| g( x) - g(x - y) \big| }{ |y|^{2 + \alpha} } ~d y\\[2mm]
		&~\leq~ \|g'\|_{{\bf L}^{\infty}(\R)}\cdot \left(\int_{|y|\leq\eta}{dy\over |y|^{1+\alpha}}\right)+2\|g\|_{{\bf L}^{\infty}(\R)}\cdot\left(\int_{|y| > \eta}{dy\over |y|^{2+\alpha}}\right)\\
		&~\leq~\frac{2}{ |\alpha|} \eta^{- \alpha} \| g' \|_{{\bf L}^{\infty}(\R)}+\frac{4}{1 + \alpha} \eta^{-(1 + \alpha)} \| g \|_{{\bf L}^{\infty}(\R)}.
	\end{split}
	\]
	In particular,  ${\bf N}$ satisfies assumption {\bf (A2)} with $\lambda_2=0$ and parameters
	\bel{par1}
	(\lambda_1,\lambda_3)~=~\left(\frac{4}{1 + \alpha},\frac{2}{ |\alpha|}\right),\quad (\alpha_1,\alpha_3)~=~(1+\alpha,-\alpha)\quad\mathrm{and}\quad \eta_0~=~+\infty.
	\eeq
	Since $\alpha\in (-1,-2/5)$, we have
	\[
	0~<~\alpha_1~=~1+\alpha~<~-\frac{3\alpha}{2}~=~\frac{3\alpha_3}{2},
	\]
	so that the hypothesis of Theorem~\ref{Main} is fulfilled. Hence, applying Theorem~\ref{Main} together with Remark~\ref{R1} for
	\bel{par2}
	\theta_0=\frac{-2-5\alpha}{5+2\alpha},\qquad \bar{\alpha}_3~=~ \frac{2 + 2 \alpha}{5 + 2 \alpha} ,\qquad C_2~=~0,\qquad C_3~=~3^{1-\bar{\alpha}_3}\lambda_1^{{2-5\bar{\alpha}_3\over 2}}\big(C_{GN}\lambda_3\big)^{^{{3\bar{\alpha}_3\over 2}}},
	\eeq
	we obtain the following wave breaking criterion for \eqref{KDV}
	\[
	-\inf_{x\in \R}\bar{u}'(x)~>~ {C_3\over \theta^{1-\bar{\alpha}_3}\gamma_{\bar{u}}^{1-2\bar{\alpha}_3}}\cdot \|\bar{u}'''\|^{\bar{\alpha}_3}_{{\bf L}^2(\R)}.
	\]
	Using \eqref{par1}--\eqref{par2}, \eqref{C} and  $\gamma_{\bar{u}}=-\ds\inf_{x\in \R}\bar{u}'(x)/\|\bar{u}'\|_{{\bf L}^\infty(\R)}$, the above criterion can be rewritten as \eqref{cond-u0-1}. This completes the proof.
	\endproof
	\v
	
	{\bf 2. Wave breaking for Whitham equation.} Consider the Whitham equation
	\begin{equation}\label{Whithameq}
		u_t(t,x) + \left( \frac{u^2(t,x)}{2} \right)_x
		~=~ - (K * u_x)(t,x), 
		\qquad 
		u(0, \cdot)~ =~ \bar{u},
	\end{equation}
	where the kernel $K$ is defined by
	\[
	K(x) ~=~ \mathcal{F}^{-1}[m](x)
	\doteq \frac{1}{2\pi}\int_{\mathbb{R}} e^{ix\xi} m(\xi)\,d\xi,
	\qquad 
	m(\xi)~ =~ \sqrt{\frac{\tanh \xi}{\xi}}.
	\]
	\begin{lemma}\label{B-K}The kernel \(K\) is positive, even, and decreasing on \((0,\infty)\), and satisfies
		\bel{estK}
		\int_{x}^{\infty} |K'(y)|\,dy~=~ \left| K(x) \right|~\le~\frac{1}{\sqrt{2\pi x}}, \qquad \forall x>0.
		\eeq
	\end{lemma}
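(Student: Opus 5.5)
The plan is to obtain an explicit integral representation of $K$ in which positivity, evenness and monotonicity are visible, and then read off the bound (\ref{estK}) from that representation. Evenness is immediate: $m(\xi)=\sqrt{\tanh\xi/\xi}$ is real and even, so $K(x)=\frac{1}{\pi}\int_0^\infty \cos(x\xi)\,m(\xi)\,d\xi$ is real and even.

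For positivity and monotonicity, I would show that $m$ is a Stieltjes-type (completely monotone) function of $\xi^2$. The partial fraction expansion
\[
\frac{\tanh \xi}{\xi}=\sum_{k\ge 0}\frac{2}{\xi^2+\big((k+\tfrac12)\pi\big)^2}
\]
shows that $\tanh\xi/\xi$ is a Stieltjes function of $s=\xi^2$. Its square root is again a Stieltjes function: the Stieltjes class is closed under $f\mapsto f^{1/2}$, since composition with $s^{1/2}$ preserves Pick functions. Hence
\[
m(\xi)=\int_{[0,\infty)}\frac{d\mu(\tau)}{\xi^2+\tau}
\]
for a positive measure $\mu$. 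The measure charges no mass at $\tau=0$, because $m$ is bounded at $0$, and it is supported on $[(\pi/2)^2,\infty)$, where the branch cuts lie.

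Taking inverse Fourier transforms termwise, using $\mathcal F^{-1}\big[(\xi^2+\tau)^{-1}\big](x)=e^{-\sqrt\tau|x|}/(2\sqrt\tau)$, gives
\[
K(x)=\int \frac{e^{-\sqrt\tau |x|}}{2\sqrt\tau}\,d\mu(\tau).
\]
This is a positive mixture of decaying exponentials, so $K$ is positive, completely monotone on $(0,\infty)$, and in particular decreasing there. Consequently $K'\le 0$ on $(0,\infty)$ and $K(y)\to0$ as $y\to\infty$, so
\[
\int_x^\infty|K'(y)|\,dy=-\int_x^\infty K'(y)\,dy=K(x)=|K(x)|,
\]
which is the identity in (\ref{estK}).

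For the bound I would compare with the majorant $m(\xi)\le |\xi|^{-1/2}$, which holds because $\tanh\xi\le 1$. The natural tool is the Stieltjes representation: $m(\xi)\le \xi^{-1/2}$ should translate into a control of $\mu$ by the measure of $|\xi|^{-1/2}$, namely $\frac{1}{\pi}\tau^{-1/4}d\tau$. Pointwise comparison of the Stieltjes transforms does not directly give measure comparison, so instead I would use a direct estimate. Since $K$ is positive and decreasing on $(0,\infty)$,
\[
xK(x)\le \int_0^x K(y)\,dy .
\]
A cleaner route is to note that $\sqrt{|x|}\,K(x)$ can be compared via
\[
K(x)=\frac{1}{\pi}\int_0^\infty \cos(x\xi)\,m(\xi)\,d\xi \;\le\; \frac{1}{\pi}\int_0^\infty \cos(x\xi)\,\xi^{-1/2}\,d\xi=\frac{1}{\sqrt{2\pi x}} .
\]
This inequality is justified because $\xi^{-1/2}-m(\xi)$ is itself the Fourier symbol of a positive kernel. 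Indeed, $\xi^{-1/2}-m(\xi)=\xi^{-1/2}\big(1-\sqrt{\tanh\xi}\big)$ should again be of Stieltjes type: $\xi^{-1/2}$ has density $\frac1\pi\tau^{-1/4}$, and the density of $\mu$ is $\frac1\pi\,\mathrm{Im}$ of the boundary values of $m(i\sqrt\tau)$, which is $\frac1\pi\tau^{-1/4}\sqrt{|\tan\sqrt\tau|}$ on the allowed intervals and $0$ elsewhere.

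The main obstacle is this last comparison: showing that the density difference is nonnegative, or equivalently that the inverse transform of $\xi^{-1/2}-m$ is positive. I would first attempt it through the explicit density of $\mu$ computed from the boundary values of $m$ on the imaginary axis. If that fails, I would fall back to a direct oscillatory estimate using the monotone decrease of $\xi^{1/2}m(\xi)$ via the second mean value theorem.
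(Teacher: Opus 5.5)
\textbf{Assessment of the proposal: genuine gap in the bound.}

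Your argument for evenness, positivity, complete monotonicity and the identity $\int_x^\infty|K'(y)|\,dy=K(x)$ is sound; it goes through the Stieltjes representation of $\sqrt{\tanh\sqrt s/\sqrt s}$. It also supplies something the paper's proof does not. The paper simply infers positivity and monotonicity of $K$ from those of $m$, and that implication fails in general: $m=\mathbf{1}_{[-1,1]}$ has inverse transform $\sin x/(\pi x)$.

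The genuine gap is the bound $K(x)\le(2\pi x)^{-1/2}$. Your primary plan is to show that $\xi^{-1/2}-m(\xi)$ is again Stieltjes in $s=\xi^2$ with nonnegative density, and this cannot succeed.
\begin{itemize}
\item The density of $s^{-1/4}$ is $\frac{\sin(\pi/4)}{\pi}\tau^{-1/4}=\frac{1}{\sqrt2\,\pi}\tau^{-1/4}$, not $\frac1\pi\tau^{-1/4}$. It is bounded near each $\tau_k=((k+\frac12)\pi)^2$.
\item The density of $\mu$ you computed, $\frac1\pi\tau^{-1/4}\sqrt{|\tan\sqrt\tau|}$ on $\{\tan\sqrt\tau<0\}$, blows up as $\sqrt\tau\downarrow(k+\frac12)\pi$.
\item The difference is therefore a signed measure, negative wherever $\tan\sqrt\tau<-\frac12$. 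So $\xi^{-1/2}-m$ is not a Stieltjes function of $\xi^2$, and its positivity cannot be read off this way.
\end{itemize}
Your fallback is only gestured at, and it states the monotonicity backwards: $\xi^{1/2}m(\xi)=\sqrt{\tanh\xi}$ is increasing, not decreasing.

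The paper instead writes $K(x)=\frac{1}{\sqrt{2\pi x}}-\frac1\pi\int_0^\infty\cos(x\xi)\,h(\xi)\,d\xi$ with $h(\xi)=\xi^{-1/2}\big(1-\sqrt{\tanh\xi}\big)$. It then asserts the last integral is nonnegative because $h$ is positive and decreasing. That reason alone is insufficient for a cosine transform ($\mathbf{1}_{[0,1]}$ gives $\sin x/x$), and the missing ingredient is exactly what your proposal lacks. There are two ways to close it.

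(a) Convexity. $h$ is the product of the positive, decreasing, convex functions $\xi^{-1/2}$ and $1-\sqrt{\tanh\xi}$ (one checks $(1-\sqrt{\tanh\xi})''>0$), hence convex. Integrate by parts once; the boundary terms vanish since $h\sim\xi^{-1/2}$ at $0$ and $h\to0$ at $\infty$. This gives $\int_0^\infty\cos(x\xi)h(\xi)\,d\xi=\frac1x\int_0^\infty\sin(x\xi)\,(-h'(\xi))\,d\xi\ge0$, because $-h'$ is positive and decreasing.

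(b) Your mean-value form, repaired. Apply Bonnet's theorem with the nonnegative decreasing factor $1-\sqrt{\tanh\xi}$, which equals $1$ at $0^+$. This reduces the integral to $\int_0^{c}\cos(x\xi)\,\xi^{-1/2}\,d\xi=x^{-1/2}\int_0^{xc}\cos u\,u^{-1/2}\,du$. These partial integrals are nonnegative for every $c$: the lobes alternate in sign with decreasing size, and the first dominates the second.

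Either argument, rather than the Stieltjes density comparison, is what yields $K(x)\le(2\pi x)^{-1/2}$.
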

	{\bf Proof.} Since the function $\xi \mapsto m(\xi)$ is positive, even, and decreasing on $(0,\infty)$, its inverse Fourier transform
	$$
	K(x)~=~\frac{1}{2\pi}\int_{\mathbb{R}} e^{ix\xi} m(\xi)~=~{1\over \pi} \int_{0}^{\infty}\cos(x\xi)m(\xi)d\xi
	$$
	is also positive, even, and decreasing on $(0,\infty)$. Indeed, for every $x>0$, we compute
	\[
	\begin{split}
		K(x)&~=~{1\over \pi} \int_{0}^{\infty}{\cos(x\xi)\over \sqrt{\xi}}d\xi-{1\over \pi} \int_{0}^{\infty}\cos(x\xi)\cdot\left({1\over \sqrt{\xi}}-m(\xi)\right)d\xi\\[2mm]
		&~=~\frac{1}{\sqrt{2\pi x}}-{1\over \pi} \int_{0}^{\infty}\cos(x\xi)\cdot{1-\sqrt{\tanh(\xi)}\over \sqrt{\xi}}d\xi.
	\end{split}
	\]
	Note that the map $\xi\mapsto \ds{1-\sqrt{\tanh(\xi)}\over \sqrt{\xi}}$ is positive and decreasing on $(0,\infty)$, so we have 
	\[
	\int_{0}^{\infty}\cos(x\xi)\cdot{1-\sqrt{\tanh(\xi)}\over \sqrt{\xi}}d\xi~\geq~0\qquad\forall x>0,
	\]
	which yields (\ref{estK}).
	\endproof
	
	Using Theorem \ref{Main} and Lemma \ref{B-K}, we obtain the wave breaking criterion for \eqref{Whithameq} as follows:
	\begin{proposition}\label{Whitham}
		For every $\theta\in \left(0,\tfrac{1}{8}\right)$ and every $\bar{u}\in H^3(\mathbb{R})$ such that
		\bel{cond-u0-whit}
		-\inf_{x\in \mathbb{R}}\bar{u}'(x)~>~{3^{3/4}2^{1/4}C^{1/4}_{GN}\over \pi^{1/4}\theta^{1/2}} \cdot \|\bar{u}'\|^{1/3}_{{\bf L}^{\infty}(\R)}\|\bar{u}'''\|_{{\bf L}^2(\mathbb{R})}^{1/6},
		\eeq
		the Whitham equation (\ref{Whithameq}) exhibits wave breaking at  time $T^*>0$ satisfying (\ref{b-T*}).
	\end{proposition}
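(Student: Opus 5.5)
The plan is to verify that the operator ${\bf N}[g]\doteq -K*g'$ satisfies {\bf (A1)}–{\bf (A2)} with $\lambda_2=0$ and $\eta_0=+\infty$, and then to apply Theorem~\ref{Main} together with Remark~\ref{R1}, in the same way as in the proof of Proposition~\ref{fKdV}.

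\textbf{Assumption (A1).} Convolution commutes with differentiation, so ${\bf N}[g]'={\bf N}[g']$. Since $K$ is even (Lemma~\ref{B-K}), the operator $g\mapsto K*g'$ has the odd symbol $i\xi m(\xi)$. It is therefore skew-adjoint, and by Plancherel $\int_{\R}(K*g')\,g\,dx=0$.

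\textbf{Assumption (A2).} Fix $\eta>0$ and split ${\bf N}[g](x)=-\int_{\R}K(y)g'(x-y)\,dy$ into $|y|\le\eta$ and $|y|>\eta$.
\begin{itemize}
\item Near part: since $K>0$, Lemma~\ref{B-K} gives
\[
\int_{|y|\le\eta}K(y)\,dy~\le~2\int_0^\eta\frac{dy}{\sqrt{2\pi y}}~=~\frac{4}{\sqrt{2\pi}}\,\eta^{1/2},
\]
so this piece is bounded by $\frac{4}{\sqrt{2\pi}}\eta^{1/2}\|g'\|_{{\bf L}^\infty(\R)}$.
\item Far part: integrate by parts, writing $g'(x-y)=-\partial_y g(x-y)$. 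This produces boundary terms $K(\eta)\big(g(x-\eta)-g(x+\eta)\big)$ and an integral $\int_{|y|>\eta}K'(y)g(x-y)\,dy$. Using \eqref{estK}, namely $\int_\eta^\infty|K'|=K(\eta)\le(2\pi\eta)^{-1/2}$, both are bounded by $\frac{4}{\sqrt{2\pi}}\eta^{-1/2}\|g\|_{{\bf L}^\infty(\R)}$.
\end{itemize}
Hence {\bf (A2)} holds with
\[
\lambda_1=\lambda_3=\frac{2\sqrt2}{\sqrt\pi},\qquad \alpha_1=\alpha_3=\tfrac12,\qquad \lambda_2=0,\qquad \eta_0=+\infty.
\]

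\textbf{Applying Theorem~\ref{Main}.} The hypothesis $\alpha_1<3\alpha_3/2$ reads $\tfrac12<\tfrac34$, so it holds. From \eqref{al}, the $\alpha_2$-entry drops out because $\lambda_2=0$, as in the proof of Proposition~\ref{fKdV}, and
\[
\theta_0=\frac{3/2-1}{5/2+3/2}=\frac18,\qquad \bar\alpha_3=\frac{1}{4},\qquad C_2=0.
\]
This matches the range $\theta\in(0,1/8)$ in the statement. With $C_3=3^{3/4}\lambda_1^{3/8}(C_{GN}\lambda_3)^{3/8}$, Remark~\ref{R1} gives the criterion
\[
m_0~>~C_3\,\theta^{-3/4}\gamma_{\bar u}^{-1/2}\|\bar u'''\|^{1/4}_{{\bf L}^2(\R)},\qquad m_0\doteq-\inf_{x\in\R}\bar u'(x).
\]
Substitute $\gamma_{\bar u}=m_0/\|\bar u'\|_{{\bf L}^\infty(\R)}$, so that $\gamma_{\bar u}^{-1/2}=m_0^{-1/2}\|\bar u'\|_{{\bf L}^\infty(\R)}^{1/2}$. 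Multiplying through by $m_0^{1/2}$ and raising both sides to the power $2/3$ turns the criterion into
\[
m_0~>~C_3^{2/3}\,\theta^{-1/2}\,\|\bar u'\|^{1/3}_{{\bf L}^\infty(\R)}\,\|\bar u'''\|^{1/6}_{{\bf L}^2(\R)},
\]
which has exactly the exponents in \eqref{cond-u0-whit}. The conclusion \eqref{b-T*} then follows directly from Theorem~\ref{Main}.

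\textbf{Main obstacle.} The delicate step is the far-field integration by parts. The kernel $K$ is not integrable, so one cannot simply bound $\int_{|y|>\eta}K(y)g'(x-y)\,dy$ by $\|K\|_{{\bf L}^1}\|g'\|_{{\bf L}^\infty}$. The argument must use both monotonicity and decay from Lemma~\ref{B-K}: $K(y)\to0$ as $y\to\infty$ kills the boundary terms at infinity, and $\int_\eta^\infty|K'|=K(\eta)$ controls the remaining integral.

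The only other care point is bookkeeping of constants. This computation gives $C_3^{2/3}=\sqrt3\,\lambda_1^{1/2}C_{GN}^{1/4}=\sqrt3\,2^{3/4}\pi^{-1/4}C_{GN}^{1/4}$. The stated prefactor is $3^{3/4}2^{1/4}\pi^{-1/4}C_{GN}^{1/4}$, which is larger than this by the factor $(3/4)^{1/4}>1$, since $3^{3/4}2^{1/4}=\big(\tfrac34\big)^{1/4}\sqrt3\,2^{3/4}$. So \eqref{cond-u0-whit} implies the condition derived above. I would re-check the near- and far-field estimates before settling which constant to record.
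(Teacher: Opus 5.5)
\textbf{Gap: the final constant comparison is reversed.} Most of your proposal is fine and matches the paper: the verification of {\bf (A1)}, the choice $\alpha_1=\alpha_3=\tfrac12$, $\lambda_2=0$, $\eta_0=+\infty$, and the algebra that turns the criterion of Theorem~\ref{Main} into the form \eqref{cond-u0-whit}. The problem is your last step, because $(3/4)^{1/4}$ is \emph{less} than $1$, not greater. With your constants $\lambda_1=\lambda_3=2\sqrt{2/\pi}$, the threshold you obtain has prefactor $\sqrt3\,2^{3/4}\pi^{-1/4}C_{GN}^{1/4}\approx 2.91\,\pi^{-1/4}C_{GN}^{1/4}$. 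The stated prefactor is $3^{3/4}2^{1/4}\pi^{-1/4}C_{GN}^{1/4}\approx 2.71\,\pi^{-1/4}C_{GN}^{1/4}$, which is \emph{smaller} by the factor $(3/4)^{1/4}$. So \eqref{cond-u0-whit} does not imply the condition you derived. Initial data whose slope lies between the two thresholds are covered by the proposition but not by your argument. As written, you prove only a weaker statement.

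\textbf{How to fix it.} The missing point is how you bound the boundary term $K(\eta)\big(g(x-\eta)-g(x+\eta)\big)$. Since $\bar\alpha_3=\tfrac14$, one has $C_3=3^{3/4}(C_{GN}\lambda_1\lambda_3)^{3/8}$, so only the product $\lambda_1\lambda_3$ matters.
\begin{itemize}
\item Putting the boundary term into the $\|g\|_{{\bf L}^\infty(\R)}$ part, as you do, gives $\lambda_1\lambda_3=8/\pi$.
\item The paper instead uses $|g(x-\eta)-g(x+\eta)|\le 2\eta\|g'\|_{{\bf L}^\infty(\R)}$. The boundary term is then at most $2\eta K(\eta)\|g'\|_{{\bf L}^\infty(\R)}\le\sqrt{2/\pi}\,\eta^{1/2}\|g'\|_{{\bf L}^\infty(\R)}$, and it is absorbed into the near-field term.
\end{itemize}
This gives $\lambda_3=3\sqrt{2/\pi}$, and $\lambda_1=\sqrt{2/\pi}$ since only the $\int_{|y|\ge\eta}|K'|$ term remains in the $\|g\|_{{\bf L}^\infty(\R)}$ part. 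Hence $\lambda_1\lambda_3=6/\pi$ and $C_3=3^{9/8}2^{3/8}\pi^{-3/8}C_{GN}^{3/8}$, so that
\[
C_3^{2/3}=\sqrt3\,(6C_{GN}/\pi)^{1/4}=3^{3/4}2^{1/4}\pi^{-1/4}C_{GN}^{1/4}.
\]
This is exactly the constant in \eqref{cond-u0-whit}. With this one change, the rest of your argument goes through unchanged.
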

	{\bf Proof.} Consider the operator {\bf N}: $H^{1}(\R) \to {\bf L}^{2}(\R)$ defined by
	\[
	{\bf N}[g](x)~\doteq~\left(K * g'\right)(x)~=~\int_{\R} K(y) g'(x - y)d y,
	\]
	which satisfies the assumption {\bf (A1)}. For every $\eta>0$ and $x\in\mathbb{R}$, using (\ref{estK}) and integrating by parts, we estimate 
	\bel{NNN}
	\begin{split}
		| {\bf N}[g](x) |~&\leq~\left|\left( \int_{|y| \leq \eta} + \int_{|y|> \eta}  \right) K(y) g'(x - y)d y\right|~\leq~{\| g' \|_{{\bf L}^{\infty}(\R)}\over \sqrt{2\pi}} \int_{|y| \leq \eta} \frac{1}{\sqrt{|y|}}~d y \\[2mm]
		&\qquad\qquad +|K(\eta)g(x-\eta)-K(-\eta)g(x+\eta)|+\left|\int_{|y|\geq \eta}K'(y) g(x-y)dy\right|\\[2mm]
		&~\leq~{2\sqrt{2}\over \sqrt{\pi}}  \eta^{1/2} \| g' \|_{{\bf L}^{\infty}(\R)}+2
		\eta K(\eta)  \|g'\|_{{\bf L}^{\infty}(\R)} +  \| g \|_{{\bf L}^{\infty}(\R)} \int_{ |y| \geq \eta}  | K'(y) |~d y  \\[2mm]
		&\leq~{3\sqrt{2}\over \sqrt{\pi}}  \eta^{1/2} \| g' \|_{{\bf L}^{\infty}(\R)} +{\sqrt{2}\over \sqrt{\pi}} \eta^{-1/2} \| g \|_{{\bf L}^{\infty}(\R)}.
	\end{split}
	\eeq
	Hence,  ${\bf N}$ satisfies assumption {\bf (A2)} with $\lambda_2=0$ and parameters
	\bel{par21}
	\lambda_1~=~{\sqrt{{2}\over {\pi}}}, \qquad \lambda_3~=~{3{\sqrt{{2}\over {\pi}}}},\qquad \alpha_1~=~\alpha_3~=~\frac{1}{2} \qquad\mathrm{and}\qquad \eta_0~=~+\infty.
	\eeq
	Since $0 < \alpha_{1} = 1/2 < 3/4 = 3 \alpha_{3} / 2$, the hypothesis of Theorem~\ref{Main} is fulfilled in this case. Therefore,  applying Theorem~\ref{Main}  for
	\bel{par22}
	\theta_0~=~\frac{3 \alpha_{3} - 2 \alpha_{1}}{5 \alpha_{1} + 3 \alpha_{3}}~=~\frac{1}{8},\qquad \bar{\alpha}_3~=~\frac{2 \alpha_{1}}{5 \alpha_{1} + 3 \alpha_{3}} = \frac{1}{4},\qquad C_3~=~{3^{9/8}2^{3/8}\over \pi^{3/8}}C_{GN}^{3\over 8},
	\eeq
	we achieve the following wave breaking criterion  for (\ref{Whithameq})
	\[
	-\inf_{x\in \mathbb{R}}\bar{u}'(x)~>~{C_3\over \theta^{3/4}\gamma_{\bar{u}}^{1/2}}\|\bar{u}'''\|_{{\bf L}^2(\mathbb{R})}^{1/4}.
	\]
	Finally, using \eqref{par21}--\eqref{par22} and $\gamma_{\bar{u}}=-\ds\inf_{x\in \R}\bar{u}'(x)/\|\bar{u}'\|_{{\bf L}^\infty(\R)}$, the above criterion can be rewritten as \eqref{cond-u0-whit}. 
	\endproof
	\medskip
	
	\n {\bf 3. Wave breaking for Fornberg-Whitham equation.}
	Consider the Fornberg-Whitham equation
	\bel{FWeq}
	u_{t}(t, x) + \left( \frac{ u^{2}(t, x) }{2} \right)_{x} = (G_{s} * u_{x})(t, x),
	\qquad
	u(0, \cdot) = \bar{u},
	\eeq
	where the kernel $G_{s}$ is the Bessel potential of order $s>0$ whose Fourier multiplier is given by $(1+\xi^2)^{-s/2}$. For every $x\neq 0$, it is computed by 
	\bel{Gs}
	G_s(x)~=~{1\over \sqrt{4\pi}\Gamma(s/2)}\int_{0}^{\infty}t^{(s-3)/2}\cdot e^{-{|x|^2\over 4t}-t}dt,
	\eeq
	with $\Gamma$ being the Gamma function 
	\[
	\Gamma(\beta)~\doteq~\int_{0}^{\infty}t^{\beta-1}e^{-t}dt,\qquad \beta>0.
	\]
	Introduce the function \(\gamma : (0,\infty)\setminus\{1\} \to \mathbb{R}\) defined by  
	\[
	\gamma(s)\doteq \frac{\Gamma\!\left(|s-1|/2\right)}{\sqrt{\pi}\,\Gamma(s/2)}, \qquad s\in (0,\infty)\setminus\{1\}.
	\]
	By direct computation and properties of the Gamma function, the function \(\gamma\) is strictly increasing on \((0,1)\) and strictly decreasing on \((1,\infty)\). Moreover, since $\Gamma(1/2)=\sqrt{\pi}$ and  $\ds\lim_{s\to 0^+}s\Gamma(s)=1$, it holds 
	\bel{p-gamma}
	\lim_{s\to 1}\big(|1-s|\gamma(s)\big)~=~{2\over\pi}, \qquad 
	\lim_{s\to 0^+}\gamma(s)=\lim_{s\to +\infty}\gamma(s)=0.
	\eeq
	\begin{lemma}
		The kernel $G_s:\mathbb{R}\to\mathbb{R}$ is positive, even, and decreasing on $(0,\infty)$. Moreover, for every $x\neq 0$, it holds 
		\bel{b-G-s}
		G_s(x)~\leq~\begin{cases}
			\ds {\gamma(s)\over 2}&\mathrm{if}\qquad s>1,\\[3mm]
			\ds G_1(1)+{|\ln |x||\over \pi}&\mathrm{if}\qquad s=1, 0<|x|\leq 1,\\[3mm]
			\ds {\gamma(s)\over 2^s }\cdot |x|^{s-1}&\mathrm{if}\qquad s\in (0,1).
		\end{cases}
		\eeq
	\end{lemma}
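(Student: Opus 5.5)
Positivity, evenness and monotonicity can be read off the integral representation (\ref{Gs}) directly. The integrand $t^{(s-3)/2}e^{-|x|^2/(4t)-t}$ is strictly positive and depends on $x$ only through $|x|^2$. For $|x|$ increasing, the factor $e^{-|x|^2/(4t)}$ decreases pointwise in $t$. Hence $G_s$ is positive, even, and decreasing on $(0,\infty)$. The substance of the lemma is the three pointwise bounds in (\ref{b-G-s}). In each case I will drop one of the two exponential factors and evaluate the resulting Gamma integral. Throughout I use $\sqrt{4\pi}\,\Gamma(s/2)$ in the normalization, so the prefactor is $1/(2\sqrt{\pi}\Gamma(s/2))$.

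\emph{Case $s>1$.} Here I bound $e^{-|x|^2/(4t)}\le 1$, which gives
\[
G_s(x)~\le~\frac{1}{2\sqrt{\pi}\Gamma(s/2)}\int_0^\infty t^{\frac{s-1}{2}-1}e^{-t}\,dt~=~\frac{\Gamma((s-1)/2)}{2\sqrt{\pi}\Gamma(s/2)}~=~\frac{\gamma(s)}{2}.
\]
The integral converges precisely because $(s-1)/2>0$.

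\emph{Case $s\in(0,1)$.} Now the $t$-integral near $0$ needs the Gaussian factor, so instead I drop $e^{-t}\le 1$. The substitution $t=|x|^2/(4\tau)$ gives $dt=|x|^2/(4\tau^2)\,d\tau$ and $t^{(s-3)/2}=(|x|^2/4)^{(s-3)/2}\tau^{(3-s)/2}$. Hence
\[
\int_0^\infty t^{\frac{s-3}{2}}e^{-\frac{|x|^2}{4t}}dt~=~\Big(\frac{|x|^2}{4}\Big)^{\frac{s-1}{2}}\int_0^\infty \tau^{\frac{1-s}{2}-1}e^{-\tau}d\tau~=~2^{1-s}|x|^{s-1}\,\Gamma\!\Big(\frac{1-s}{2}\Big).
\]
Dividing by $2\sqrt{\pi}\Gamma(s/2)$ yields exactly $\gamma(s)\,2^{-s}|x|^{s-1}$.

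\emph{Case $s=1$, $0<|x|\le1$.} This borderline case is the main obstacle, since both Gamma integrals above diverge. I will not estimate $G_1(x)$ directly but compare it with $G_1(1)$. With the prefactor $1/(2\sqrt{\pi}\Gamma(1/2))=1/(2\pi)$,
\[
G_1(x)-G_1(1)~=~\frac{1}{2\pi}\int_0^\infty t^{-1}e^{-t}\Big(e^{-\frac{|x|^2}{4t}}-e^{-\frac{1}{4t}}\Big)dt~\le~\frac{1}{2\pi}\int_0^\infty t^{-1}\Big(e^{-\frac{|x|^2}{4t}}-e^{-\frac{1}{4t}}\Big)dt .
\]
The inequality holds because the bracket is nonnegative for $|x|\le1$ and $e^{-t}\le1$. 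The substitution $\sigma=1/(4t)$ turns the last integral into the Frullani integral $\int_0^\infty \sigma^{-1}(e^{-|x|^2\sigma}-e^{-\sigma})d\sigma=\ln(1/|x|^2)=2|\ln|x||$. This gives $G_1(x)\le G_1(1)+|\ln|x||/\pi$, as claimed.

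The only delicate points are checking convergence in each case and keeping track of the normalizing constant, which is where I expect any errors to arise.
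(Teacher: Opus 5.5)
Your proof is correct. For $s\in(0,1)$ it is the paper's argument: drop $e^{-t}$ and substitute $u=x^2/(4t)$. In the other two cases you take a derivative-free route.

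For $s>1$, the paper bounds $G_s(x)\le \tfrac12\|\partial_xG_s\|_{{\bf L}^1(\R)}$ and computes that norm from the integral representation, using $\int_\R|x|e^{-x^2/(4t)}dx=4t$. You instead read off $G_s(x)\le G_s(0)$ by dropping $e^{-|x|^2/(4t)}$. Both routes end in the same integral $\Gamma((s-1)/2)$, and yours also shows the bound is attained at $x=0$. The paper's detour produces the identity $\|\partial_xG_s\|_{{\bf L}^1(\R)}=\gamma(s)$, which it reuses in the case $s>1$ of the Fornberg--Whitham proposition. This is equally available from your version: $G_s$ is even, decreasing on $(0,\infty)$ and vanishes at infinity, so $\|\partial_xG_s\|_{{\bf L}^1(\R)}=2G_s(0)=\gamma(s)$.

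For $s=1$, the paper proves the pointwise bound $|\partial_xG_1(x)|\le 1/(\pi|x|)$, again by discarding $e^{-t}$, and integrates from $|x|$ to $1$. Your Frullani computation is the Fubini-integrated form of the same estimate, because $e^{-|x|^2/(4t)}-e^{-1/(4t)}=\int_{|x|}^1\frac{y}{2t}e^{-y^2/(4t)}\,dy$. The two routes therefore produce the same constant $1/\pi$.

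You also derive positivity, evenness and monotonicity directly from the integral representation. The paper only appeals to these as known Fourier-analytic facts, so your version is more self-contained.

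Overall, your version is slightly more elementary, since it avoids differentiating under the integral sign. The paper's version keeps the derivative information it uses later, but nothing is lost with yours.
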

	{\bf Proof.} For every $s>0$, because $G_{s}$ is the Fourier transform of $\left( 1 + \xi^{2} \right)^{-s/2}$, it is well know that $G_{s}$ is positive, even, and monotonic. 
	Hence, it remains to establish bounds for \(|G_s(x)|\) in three cases:
	\begin{itemize}
		\item  If $s>1$ then the first inequality in  (\ref{b-G-s}) follows from 
		\[
		\begin{split}
			G_s(x)&~\leq~{\|\partial_xG_s\|_{{\bf L}^1(\R)}\over 2}~=~{1\over 8\sqrt{\pi}\Gamma(s/2)}\int_{0}^{\infty}t^{(s-5)/2}e^{-t}\left(\int_{\R}|x|e^{-{x^2\over 4t}}dx\right)dt\\[2mm]
			&~=~{1\over 2 \sqrt{\pi}\Gamma(s/2)}\int_{0}^{\infty}t^{(s-3)/2}e^{-t}dt~=~{\Gamma\!\left([s-1]/2\right)\over 2 \sqrt{\pi}\Gamma(s/2)}.
		\end{split}
		\]
		\item  In the case  $s=1$, for every $x> 0$ we compute 
		\[
		\begin{split}
			\big|\partial_xG_1(x)\big|&~=~\left|{d\over dx}\left({1\over 2\pi}\int_0^{\infty}t^{-1}e^{-t}e^{-x^2\over 4t}dt\right)\right|~=~\left|{x\over 4\pi}\int_{0}^{\infty}t^{-2}e^{-t}e^{{-x^2\over 4t}}dt\right|\\[3mm]
			&~\leq~{|x|\over 4\pi}\cdot \int_{0}^{\infty}t^{-2}e^{{-x^2\over 4t}}dt~=~{1\over \pi |x|},
		\end{split}
		\]
		which implies the second inequality in (\ref{b-G-s}).
		\item Finally, if $0<s<1$, then for every $x>0$, it holds
		\[
		0~\leq~ G_s(x)~\leq~{1\over 2\sqrt{\pi}\Gamma(s/2)}\int_{0}^{\infty}t^{(s-3)/2}\cdot e^{-{x^2\over 4t}}dt.
		\]
		By a change of variable $u=\ds {x^2\over 4t}$, we have 
		\[
		\int_{0}^{\infty}t^{(s-3)/2}\cdot e^{-{x^2\over 4t}}dt~=~{2^{1-s}\over x^{1-s}}\cdot \int_{0}^{\infty}u^{(1-s)/2-1}e^{-u}du~=~{{2^{1-s}\Gamma([1-s]/2)}\over x^{1-s}},
		\]
		which yields the last inequality in  (\ref{b-G-s}). 
	\end{itemize}
	The proof is complete.
	\endproof
	\medskip

	Combining Theorem~\ref{Main} with the above lemma, we derive a wave-breaking criterion for \eqref{Whithameq}. For each $s \in (0,1)$, define
	\bel{Cond1}
	\beta_1(s)~=~[3 \gamma(s)]^{\frac{1}{2}}
	\left[
	\frac{2+2s}{s}\cdot C_{GN}
	\right]^{\frac{1- s}{2}},
	\qquad 
	\beta_2(s)~=~[3 \gamma(s)]^{\frac{1}{2}}
	\left[
	\frac{{2}^{2s-1}}{{(2s-1)}^{1-s}}
	\right]^{\frac{1}{2}}.
	\eeq
	Our main result is as follows:
	\begin{proposition}\label{FW} 
		For  the Fornberg-Whitham equation (\ref{FWeq}), assume that one of the following holds:
		\begin{enumerate}
			\item [(i).] For   $s \in (2/5, 1)$ and  $\theta\in \left(0,\frac{5s-2}{5-2s} \right)$, the initial data  $\bar{u}\in H^3(\mathbb{R})$ satisfies 
			\bel{cond-u0-FW1}
			-\inf_{x\in \R}\bar{u}'(x)~>~{\beta_1(s)\over \theta^{1/2}} \cdot \| \bar{u}' \|^{ \frac{1}{6}+\frac{s}{3} }_{{\bf L}^{\infty}(\R)} \| \bar{u}''' \|_{{\bf L}^2(\mathbb{R})}^{ \frac{1}{3} - \frac{s}{3} }.
			\eeq
			\item [(ii).] For $s \in (2/3,1)$ and $\theta\in \left(0,{3s-2\over 3-2s} \right)$, the initial data  $\bar{u}\in H^2(\mathbb{R})$ satisfies
			\bel{cond-u0-FW2}
			-\inf_{x\in \R}\bar{u}'(x)~>~{\beta_2(s)\over \theta^{1/2} }\cdot {\| \bar{u}' \|^{ s-{1\over 2} }_{{\bf L}^{\infty}(\R)}}  \|\bar{u}''\|^{1-s}_{{\bf L}^2(\R)}.
			\eeq
			\item [(iii).] For $s = 1$, $\tau \in (2/3, 1)$, and $\theta\in \left(0,{3\tau-2\over 3-2\tau}\right)$, the initial data  $\bar{u}\in H^2(\mathbb{R})$ satisfies
			\bel{cond-u0-FW3}
			-\inf_{x\in \R}\bar{u}'(x)~>~\max\left\{\sqrt{12\over \pi (1-\tau)\theta}\cdot {\|\bar{u}'\|_{{\bf L}^\infty(\R)}^{1\over 2}}, \sqrt{12\over \pi (1-\tau)\theta} \cdot {\| \bar{u}' \|^{ \tau-\frac{1}{2} }_{{\bf L}^{\infty}(\R)}} \|\bar{u}''\|^{1 - \tau}_{{\bf L}^2(\R)}\right\}.
			\eeq
			\item [(iv).] For $s\in (1,\infty)$ and  $\theta\in (0,1)$, the initial data $\bar{u}\in H^2(\mathbb{R})$ satisfies 
			\bel{cond-u0-FW4}
			-\inf_{x\in \R}\bar{u}'(x)~>~\sqrt{3 \gamma(s)\over \theta}\cdot \|\bar{u}'\|_{{\bf L}^\infty(\R)}^{1\over 2}.
			\eeq
			
		\end{enumerate}
		Then the wave breaking time $T^*$ of the Fornberg-Whitham equation (\ref{FWeq}) is bounded by (\ref{b-T*}).
	\end{proposition}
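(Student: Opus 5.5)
The plan is to take ${\bf N}[g]=G_s*g'$ and verify {\bf (A1)}–{\bf (A2)} in each regime with suitable parameters. Then Theorem~\ref{Main} (with Remark~\ref{R1}, since $\eta_0=+\infty$), Proposition~\ref{L1-ker}, and the remark for $\lambda_3=0$ give the four criteria.

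\textbf{Checking (A1).} Since $G_s$ is even, ${\bf N}$ commutes with $\partial_x$. Also $\int (G_s*g')g\,dx=0$, because $G_s*\partial_x$ has an odd real symbol. So {\bf (A1)} holds in every case.

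\textbf{The splitting.} For the $L^\infty$ bound, split at $|y|=\eta$ as in the Whitham case:
\[
{\bf N}[g](x)=\int_{|y|\le\eta}G_s(y)g'(x-y)\,dy+\int_{|y|>\eta}G_s(y)g'(x-y)\,dy .
\]
Treat the outer piece by integrating by parts. The boundary terms are bounded by $2G_s(\eta)\|g\|_{{\bf L}^\infty}$. The remaining integral is bounded by $\|g\|_{{\bf L}^\infty}\int_{|y|>\eta}|G_s'|\,dy=2G_s(\eta)\|g\|_{{\bf L}^\infty}$, using monotonicity.

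\textbf{Case (iv), $s>1$.} Here $G_s\in L^1$ is even. I would instead integrate by parts over all of $\R$, giving ${\bf N}[g]=G_s'*g$ with $G_s'$ odd and $\|G_s'\|_{{\bf L}^1}=2G_s(0^+)$. To land on the constant $\gamma(s)$, use the computation in the lemma: $\|\partial_xG_s\|_{{\bf L}^1}=\gamma(s)$. So (\ref{L1-Kernel}) holds with $\lambda_1=\gamma(s)$, and Proposition~\ref{L1-ker} gives (\ref{cond-u0-FW4}) directly.

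\textbf{Case (i), $s\in(0,1)$.} Use (\ref{b-G-s}). The inner piece gives
\[
\|g'\|_{{\bf L}^\infty}\cdot 2\frac{\gamma(s)}{2^s}\frac{\eta^s}{s},
\]
and the outer piece gives $4G_s(\eta)\|g\|_{{\bf L}^\infty}\le 4\gamma(s)2^{-s}\eta^{s-1}\|g\|_{{\bf L}^\infty}$. This yields $\alpha_1=1-s$ and $\alpha_3=s$. The hypothesis $\alpha_1<3\alpha_3/2$ is exactly $s>2/5$, and $\theta_0=(3\alpha_3-2\alpha_1)/(5\alpha_1+3\alpha_3)=(5s-2)/(5-2s)$. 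Then compute $\bar\alpha_3=2(1-s)/(5-2s)$ and $C_3$, and substitute $\gamma_{\bar u}=m(0)/\|\bar u'\|_{{\bf L}^\infty}$. Condition (\ref{cond-u0}) then becomes $m(0)^{1-\bar\alpha_3\cdot\ldots}$ compared with a power of $\|\bar u'''\|$. Solving for $m(0)$ should give the exponents $\tfrac16+\tfrac s3$ and $\tfrac13-\tfrac s3$, with prefactor $\beta_1(s)\theta^{-1/2}$ (the $\theta$ power works out to $1/2$ as in Proposition~\ref{fKdV}).

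\textbf{Case (ii), $s\in(0,1)$ with the $L^2$ term.} Bound the inner piece by Cauchy–Schwarz instead:
\[
\|g'\|_{{\bf L}^2}\Big(\int_{|y|\le\eta}G_s^2\Big)^{1/2}\lesssim \eta^{s-1/2}\|g'\|_{{\bf L}^2},
\]
which needs $s>1/2$. So $\alpha_2=s-1/2$ and $\lambda_3=0$. The hypothesis $\alpha_1<2\alpha_2$ reads $1-s<2s-1$, i.e.\ $s>2/3$, and $\theta_0=(2\alpha_2-\alpha_1)/(4\alpha_1+2\alpha_2)=(3s-2)/(3-2s)$. Apply the $\lambda_3=0$ remark (data in $H^2$) and solve for $m(0)$, which should give $\beta_2(s)$ containing $(2s-1)^{-(1-s)}$.

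\textbf{Case (iii), $s=1$.} The logarithmic bound gives $G_1(y)\lesssim G_1(1)+|\ln|y||/\pi$ near $0$. Two things are needed. First, for small $\eta$ (so $\eta_0$ is finite, say $\eta_0=1$), bound $G_1(\eta)\le C_\tau\eta^{-(1-\tau)}$ using $|\ln\eta|\le \eta^{-(1-\tau)}/(e(1-\tau))$; this gives $\alpha_1=1-\tau$. Second, bound the $L^2$ norm of $G_1$ on $[-\eta,\eta]$ by $\eta^{\tau-1/2}$, giving $\alpha_2=\tau-1/2$. The finite $\eta_0$ is what produces the first term of the max in (\ref{cond-u0-FW3}); Proposition~\ref{L1-ker} style arithmetic explains the factor $\sqrt{12/(\pi(1-\tau)\theta)}$.

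\textbf{Main obstacle.} The main obstacle is matching the exact explicit constants, above all in case (iii), where $G_1(1)$ must be absorbed into the $\pi^{-1}(1-\tau)^{-1}$ bound. I would first try crude inequalities $|\ln \eta|\le ((1-\tau)e)^{-1}\eta^{\tau-1}$ and $G_1(1)\le 1/\pi$, and adjust $\lambda_i$ generously if needed.
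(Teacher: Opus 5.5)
\textbf{Gap in case (i): the constant $\beta_1(s)$ does not come out of your splitting.} You bound both the boundary terms and the tail integral by multiples of $\|g\|_{{\bf L}^\infty}$. This gives $\lambda_1=2^{2-s}\gamma(s)$ and $\lambda_3=2^{1-s}\gamma(s)/s$. With $\bar\alpha_3=\frac{2-2s}{5-2s}$, Theorem~\ref{Main} and Remark~\ref{R1} reduce, after substituting $\gamma_{\bar u}$ and solving for $m(0)$, to
\[
-\inf_{x\in\R}\bar u'(x)~>~\theta^{-1/2}\Big[3\lambda_1^{s}\big(C_{GN}\lambda_3\big)^{1-s}\Big]^{1/2}\|\bar u'\|_{{\bf L}^\infty(\R)}^{\frac16+\frac s3}\|\bar u'''\|_{{\bf L}^2(\R)}^{\frac13-\frac s3}.
\]
Your values give the prefactor $\sqrt{6\gamma(s)}\,(C_{GN}/s)^{(1-s)/2}=\beta_1(s)\cdot\big[2^{s}(1+s)^{s-1}\big]^{1/2}$. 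The bracket is strictly greater than $1$ on $(2/5,1)$, and it tends to $2$ as $s\to1$. So you would only prove a strictly weaker criterion than \eqref{cond-u0-FW1}, and your claim that the prefactor ``should'' be $\beta_1(s)$ is false for your $\lambda_i$.

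\textbf{The fix.} Use the evenness of $G_s$ to pair the boundary terms:
\[
G_s(\eta)g(x-\eta)-G_s(-\eta)g(x+\eta)=G_s(\eta)\big[g(x-\eta)-g(x+\eta)\big].
\]
Bound this by $2\eta G_s(\eta)\|g'\|_{{\bf L}^\infty}\le 2^{1-s}\gamma(s)\eta^{s}\|g'\|_{{\bf L}^\infty}$. Now only the tail $2G_s(\eta)\|g\|_{{\bf L}^\infty}$ feeds $\lambda_1$, so $\lambda_1=2^{1-s}\gamma(s)$ and $\lambda_3=2^{1-s}(1+s)\gamma(s)/s$. Then $3\lambda_1^s(C_{GN}\lambda_3)^{1-s}=3\gamma(s)\big(\tfrac{2+2s}{s}C_{GN}\big)^{1-s}=\beta_1(s)^2$, as required. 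This is what the paper does. It is also what the Whitham estimate \eqref{NNN}, which you cite as your template, does: there the boundary term is $2\eta K(\eta)\|g'\|_{{\bf L}^\infty}$, not a multiple of $\|g\|_{{\bf L}^\infty}$.

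\textbf{The other cases are fine.}
\begin{itemize}
\item In (ii) the paper keeps exactly your cruder bound $4G_s(\eta)\|g\|_{{\bf L}^\infty}$, because there is no $\|g'\|_{{\bf L}^\infty}$ term to absorb the boundary terms. Together with the Cauchy--Schwarz constant $\lambda_2=2^{1/2-s}\gamma(s)/\sqrt{2s-1}$, this reproduces $\beta_2(s)$.
\item Case (iv) matches the paper.
\item In (iii) you cannot ``adjust $\lambda_i$ generously''. The two entries of the max are $\sqrt{3\lambda_1/\theta}\,\|\bar u'\|_{{\bf L}^\infty(\R)}^{1/2}$ and $\sqrt{3\lambda_1^{2\tau-1}\lambda_2^{2-2\tau}/\theta}\,\|\bar u'\|_{{\bf L}^\infty(\R)}^{\tau-1/2}\|\bar u''\|_{{\bf L}^2(\R)}^{1-\tau}$, so you need $\lambda_1,\lambda_2\le\frac{4}{\pi(1-\tau)}$. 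Your crude inequalities do achieve this. First, $1+\frac{1}{e(1-\tau)}\le\frac{1}{1-\tau}$ because $\tau>1/e$. Second, the logarithmic bound and $G_1(1)<1/\pi$ give $\big(\int_{|y|\le\eta}G_1^2\,dy\big)^{1/2}\le\frac4\pi(1+|\ln\eta|)\eta^{1/2}$. So (iii) is only a matter of careful bookkeeping.
\end{itemize}
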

	{\bf Proof.} For every $s>0$, the operator {\bf N}: $H^{2}(\R) \to H^{1}(\R)$ defined by
	\[
	{\bf N}[g](x)~\doteq~(G_{s} * g')(x)~=~\int_{\R} G_{s}(y) g'(x - y)d y
	\]
	satisfies the assumption {\bf (A1)}. We verify the assumption {\bf (A2)} in four cases:
	\medskip
	
	{\bf 1.} If $s \in (2/5, 1)$, then following the same estimate as in (\ref{NNN}) together with (\ref{b-G-s}), for every $\eta >0$ and $x>0$, we estimate 
	\[
	\begin{split}
		|{\bf N}[g](x)|&~\leq~\left( \int_{|y| \leq \eta} + \int_{|y| > \eta} \right) G_{s}(y) g'(x - y)d y~\leq~ \| g' \|_{{\bf L}^{\infty}(\R)} \int_{|y| \leq \eta} |G_{s}(y)|dy\\[2mm]
		&\qquad\qquad+\left| G_{s}(\eta) g(x - \eta)-G_{s}(- \eta) g(x + \eta) \right| + \| g \|_{{\bf L}^{\infty}(\R)} \int_{|y|\geq \eta} |\partial_y G_{s}(y)|d y\\[2mm]
		&~\leq~\|g' \|_{{\bf L}^{\infty}(\R)}\left( \int_{|y| \leq \eta} |G_{s}(y)|dy+2 \eta |G_s(\eta)|\right)+2 \|g\|_{{\bf L}^{\infty}(\R)}\cdot |G_{s}(\eta)|\\[2mm]
		&~\leq~{2^{1-s}(1+s)\gamma(s)\over s}\cdot \eta^{s} \| g' \|_{{\bf L}^{\infty}(\R)} +2^{1-s}\gamma(s)\cdot \eta^{s-1} \| g \|_{{\bf L}^{\infty}(\R)}.
	\end{split} 
	\]
	In particular,  ${\bf N}$ satisfies assumption {\bf (A2)} with $\lambda_2=0$ and parameters
	\bel{FWA2}
	\left( \lambda_{1}, \lambda_{3} \right)~=~ 2^{1-s}\left(\gamma(s),{(1+s)\gamma(s)\over s} \right),\qquad (\alpha_{1}, \alpha_{3})~=~(1 - s, s) \qquad\mathrm{and}\qquad \eta_0~=+ \infty .
	\eeq
	Since $s\in (2/5,1)$, it holds
	\[
	0 ~<~ \alpha_{1} ~=~ 1-s ~<~ \ds{3s\over 2} ~=~ \ds{3 \alpha_{3} \over 2},
	\]
	and the hypotheses of Theorem~\ref{Main} are satisfied. Hence, applying Theorem~\ref{Main} with
	\[
	\begin{cases}
		\theta_0~=~\ds{3\alpha_3-2\alpha_1\over 5\alpha_1+3\alpha_3 }~=~\frac{5s-2}{5-2s},\qquad \bar{\alpha}_3~=~{2\alpha_1\over 5\alpha_1+3\alpha_3}~=~ \frac{2-2s}{5-2s},\\[4mm]
		\ds C_2~=~0,\qquad C_3~=~3^{1-\bar{\alpha}_3}\lambda_1^{{2-5\bar{\alpha}_3\over 2}}\big(C_{GN}\lambda_3\big)^{^{{3\bar{\alpha}_3\over 2}}}~=~\left[3C_{GN}^{1-s}\left(\frac{2+2s}{s}\right)^{1-s}\gamma(s) \right]^{\frac{3}{5-2s}},
	\end{cases}
	\]
	and recalling (\ref{Cond1}), we achieve the wave breaking criterion as in \eqref{cond-u0-FW1}.
	\medskip
	
	{\bf  2.} If $s \in (2/3,1)$ then for every $\eta >0$ and $x>0$, we estimate 
	\[
	\begin{split}
		|{\bf N}[g](x)|&~\leq~\left( \int_{|y| \leq \eta} + \int_{|y| > \eta} \right) G_{s}(y) g'(x - y)d y~\leq~ \| g' \|_{{\bf L}^{2}(\R)} \left(\int_{|y| \leq \eta} |G_s(y)|^2dy\right)^{1/2}\\[2mm]
		&\qquad+\left| G_{s}(\eta) g(x - \eta)\right|+\left|G_{s}(- \eta) g(x + \eta) \right| + \| g \|_{{\bf L}^{\infty}(\R)}\cdot \int_{|y|\geq \eta} |\partial_yG_s(y)|d y\\[2mm]
		&~\leq~ {2^{-s+(1/2)}\gamma(s)\over \sqrt{2s-1}}\cdot \eta^{s-(1/2)} \| g' \|_{{\bf L}^{2}(\R)} +2^{2-s}\gamma(s)\cdot \eta^{s-1} \| g \|_{{\bf L}^{\infty}(\R)},
	\end{split} 
	\]
	which implies that ${\bf N}$ satisfies assumption {\bf (A2)} with $\lambda_3=0$ and parameters
	\bel{FWA3}
	\left( \lambda_{1}, \lambda_{2} \right)~=~\left(2^{2-s}\gamma(s),{2^{-s + (1/2)}\gamma(s)\over \sqrt{2s-1}}\right),\quad (\alpha_{1}, \alpha_{2})~=~(1 - s, s-(1/2)) \quad\mathrm{and}\quad \eta_0~=+\infty .
	\eeq
	Since $s\in (2/3,1)$, it holds 
	\[
	0~<~ \alpha_{1} ~=~ 1-s~ <~ 2s-1 ~=~2\alpha_2,
	\]
	and  the hypothesis of Theorem~\ref{Main} is fulfilled in this case.  Hence,  applying Theorem~\ref{Main}  for
	\[
	\begin{cases}
		\theta_0~=~\ds{2\alpha_2-\alpha_1\over 4\alpha_1+2\alpha_2}~=~{3s-2\over 3-2s},\qquad \bar{\alpha}_2~=~{\alpha_1\over 2\alpha_1+\alpha_2}~=~{2-2s\over 3-2s },\\[4mm]
		C_2~=~ 3^{1-\bar{\alpha}_2}\lambda^{1-2\bar{\alpha}_2}_1\lambda_2^{\bar{\alpha}_2}~=~\ds  3^{\frac{1}{3-2s}}
		\,2^{\frac{2s - 1}{3 - 2s} }
		\,\gamma(s)^{\frac{1}{3 - 2s}}
		\,(2s-1)^{-\frac{1-s}{3-2s}},\qquad C_3~=~0,
	\end{cases}
	\]
	and recalling (\ref{Cond1}), we achieve the wave breaking criterion as in (\ref{cond-u0-FW2}).
	\medskip

	{\bf 3.} If $s=1$, then with the same estimate as in Case 2, for every $\eta\in (0,1]$ we have 
	\[
	\big\|{\bf N}[g]\big\|_{{\bf L}^{\infty}(\R)}~\leq~4 \| g \|_{{\bf L}^{\infty}(\R)} |G_1(\eta)|+ \| g' \|_{{\bf L}^{2}(\R)}\left(\int_{|y| \leq \eta} |G_{1}(y)|^2dy\right)^{1/2}.
	\]
	By the second bound in (\ref{b-G-s}) and $\ds G_1(1)< \frac{1}{\pi}$, we directly compute that  
	\[
	\begin{split}
		\big\|{\bf N}[g]\big\|_{{\bf L}^{\infty}(\R)}&~\leq~{4(1+|\ln \eta|)\over \pi}\cdot \| g \|_{{\bf L}^{\infty}(\R)}+{4(1+|\ln\eta|)\over \pi}\eta^{1/2} \| g' \|_{{\bf L}^{2}(\R)}\\[2mm]
	\end{split}
	\]
	In particular, for every fixed $\tau\in (2/3,1)$, using the fact that
	\[
	1+|\ln\eta|~\leq~{\eta^{\tau-1}\over 1-\tau},\qquad\eta\in (0,1],
	\] 
	we derive 
	\[
	\big\|{\bf N}[g]\big\|_{{\bf L}^{\infty}(\R)}~\leq~{4\over \pi (1-\tau) }\eta^{\tau-1}\| g \|_{{\bf L}^{\infty}(\R)}+{4\over \pi (1-\tau) }\eta^{(\tau-1/2)}\| g' \|_{{\bf L}^{2}(\R)},\qquad \eta\in (0,1].
	\]
	Hence, ${\bf N}$ satisfies assumption {\bf (A2)} with $\lambda_3=0$ and parameters
	\bel{FWA4}
	\left( \lambda_{1}, \lambda_{2} \right)~=~\left({4\over \pi (1-\tau) },{4\over \pi (1-\tau) }\right),\quad (\alpha_{1}, \alpha_{2})~=~(1-\tau, \tau-1/2) \quad\mathrm{and}\quad \eta_0~=1 .
	\eeq
	Since $\tau\in (2/3,1)$, it holds
	\[
	0~<~ \alpha_{1} ~=~ 1-\tau~ <~ 2\tau-1 ~=~2\alpha_2,
	\]
	and  the hypothesis of Theorem~\ref{Main} is fulfilled. Hence,  applying Theorem~\ref{Main}  for
	\[
	\begin{cases}
		\ds \theta_0~=~\ds{2\alpha_2-\alpha_1\over 4\alpha_1+2\alpha_2}~=~{3\tau-2\over 3-2\tau},\qquad\bar{\alpha}_2~=~{\alpha_1\over 2\alpha_1+\alpha_2}~=~{2-2\tau\over 3-2\tau},\qquad \eta_0~=~1,\\[4mm]
		\ds C_2~=~3^{1-\bar{\alpha}_2}\lambda^{1-2\bar{\alpha}_2}_1\lambda_2^{\bar{\alpha}_2}~=~\left[{12\over\pi(1-\tau) }\right]^{1\over 3-2\tau},\qquad C_3~=~0,
	\end{cases}
	\]
	we get (\ref{cond-u0-FW3}). 
	\medskip
	
	{\bf 4.}  Finally, if $s>1$, then by the first inequality in (\ref{b-G-s}) we have 
	\[
	\begin{split}
		\big\|{\bf N}[g]\big\|_{{\bf L}^1(\R)}&~=~\big\|\partial_xG_{s} * g\big\|_{{\bf L}^1(\R)}~\leq~\|\partial_xG_{s}\|_{{\bf L}^1(\R)}\cdot  \|g\|_{{\bf L}^{\infty}(\R)}\\[2mm]
		&~\leq~{\gamma(s)}\cdot \|g\|_{{\bf L}^{\infty}(\R)}\quad\forall g\in {\bf L}^{\infty}(\R).
	\end{split}
	\]
	Hence, by Proposition \ref{L1-ker}, we achieve (\ref{cond-u0-FW4}).
	\endproof	
	\begin{remark} From (\ref{p-gamma}) and (\ref{Cond1}), it holds
		\[
		\lim_{s\to 1^-} \Big(\sqrt{1-s}\cdot \beta_2(s)\Big)~=~{\sqrt{12\over \pi}}.
		\]
		This reveals a sharp connection between the wave-breaking criterion for the subcritical case $s \in (2/3, 1)$ in (\ref{cond-u0-FW2}) and the critical case $s = 1$ in (\ref{cond-u0-FW3}). On the other hand, a different approximation procedure for (\ref{cond-u0-FW3}) could be used to yield a similar connection relating the supercritical case $s \in (1, \infty)$ in (\ref{cond-u0-FW4}) to the critical case $s = 1$.
	\end{remark}
	
	{\bf Acknowledgements.} This research was partially supported by NSF-DMS 2154201 (K.T. Nguyen).

\end{document}